\documentclass[12pt,letterpaper]{article}	

\usepackage{linguex} 
\usepackage{times} 
\usepackage{lipsum} 
\usepackage{ragged2e}
\usepackage{amsmath}
\usepackage{amsthm}
\usepackage{amsfonts}
\usepackage[utf8]{inputenc}
\usepackage{graphicx}
\usepackage{algorithm}
\usepackage{algpseudocode}
\usepackage{float}
\usepackage{subfloat}
\usepackage{subfig}
\usepackage{tikz}
\usepackage{bm}
\usepackage{pgfplots}
\usetikzlibrary{ decorations.markings}
\usepackage{cases}
\usepackage{empheq,etoolbox}

\definecolor{lightBlue}{HTML}{000080}
\definecolor{lightGreen}{HTML}{228B22}
\definecolor{lightRed}{HTML}{F08080} 
\definecolor{darkRed}{HTML}{960018}
\definecolor{lightlightBlue}{HTML}{708090} %
\definecolor{darkBlue}{HTML}{191970} %
\definecolor{lightGray}{gray}{0.9}
\definecolor{mediumGray}{gray}{0.4}
\definecolor{darkGray}{gray}{0.25}

\setlength\RaggedRightParindent{0.3in}
\RaggedRight

\usepackage{scrextend} 
\deffootnote[.5em]{0em}{1em}{\textsuperscript{\thefootnotemark}\,}

\newcounter{savefootnote} 
\newcounter{symfootnote}
\newcommand{\symfootnote}[1]{%
	\setcounter{savefootnote}{\value{footnote}}%
	\setcounter{footnote}{\value{symfootnote}}%
	\ifnum\value{footnote}>8\setcounter{footnote}{0}\fi%
	\let\oldthefootnote=\thefootnote%
	\renewcommand{\thefootnote}{\fnsymbol{footnote}}%
	\footnote{#1}%
	\let\thefootnote=\oldthefootnote%
	\setcounter{symfootnote}{\value{footnote}}%
	\setcounter{footnote}{\value{savefootnote}}%
}
\newtheorem{theorem}{Theorem}

\newtheorem{property}{Property}

\newtheorem{definition}{Definition}
\newtheorem{remark}{Remark}

\usepackage[labelsep=period]{caption}
\numberwithin{equation}{section}

\usepackage[margin=1.0in]{geometry}
\usepackage[compact]{titlesec}
\titleformat{\section}[runin]{\normalfont\bfseries}{\thesection.}{.5em}{}[.]
\titleformat{\subsection}[runin]{\normalfont\scshape}{\thesubsection.}{.5em}{}[.]
\titleformat{\subsubsection}[runin]{\normalfont\scshape}{\thesubsubsection.}{.5em}{}[.]
\usepackage[colorlinks,allcolors={black},urlcolor={blue}]{hyperref} 

\usepackage[
backend=bibtex,
style=ieee,
citestyle=numeric,
sorting=nty,
maxbibnames=99,
mincitenames=1,
maxcitenames=6
]{biblatex}
\addbibresource{bibliography.bib}

\renewenvironment{abstract}{%
	\noindent\begin{minipage}{1\textwidth}
		\setlength{\leftskip}{0.4in}
		\setlength{\rightskip}{0.4in}
		\textbf{Abstract.}}
	{\end{minipage}}

\newenvironment{keywords}{%
	\vspace{.5em}
	\noindent\begin{minipage}{1\textwidth}
		\setlength{\leftskip}{0.4in}
		\setlength{\rightskip}{0.4in}
		\textbf{Keywords.}}
	{\end{minipage}}

\begin{document}	
	\setlength{\Extopsep}{6pt}
	\setlength{\Exlabelsep}{9pt} 
	
	\begin{center}
		\normalfont\bfseries
		Numerical Solution of linear drift-diffusion and pure drift equations on one-dimensional graphs
		\vskip .5em
		\normalfont
		{Beatrice Crippa$^\star$, Anna Scotti$^\star$, Andrea Villa$^\dagger$} \\
		\vskip .5em
		\footnotesize{$^\star$ MOX-Laboratory for Modeling and Scientific Computing, Department of Mathematics, Politecnico di Milano, 20133 Milan, Italy \\
		$^\dagger$ Ricerca Sul Sistema Energetico (RSE), 20134 Milano, Italy}
	\end{center}
	
	\justifying
	
	\begin{abstract}
		We propose numerical schemes for the approximate solution of problems defined on the edges of a one-dimensional graph. In particular, we consider linear transport and a drift-diffusion equations, and discretize them by extending Finite Volume schemes with upwind flux to domains presenting bifurcation nodes with an arbitrary number of incoming and outgoing edges, and implicit time discretization. We show that the discrete problems admit positive unique solutions, and we test the methods on the intricate geometry of an electrical treeing.
	\end{abstract}
	
	\begin{keywords}
		Drift-diffusion equations, transport equations, one-dimensional graphs, upwind finite volumes, electrical treeing
	\end{keywords}
	
	\section{Introduction}
	\label{section:Introduction}
    The present work concerns the numerical solution of time-dependent drift and drift-diffusion problems, with linear advection terms, on one-dimensional graphs. \\
	This work is motivated by the simulation of defects embedded in insulation systems of power network. In fact, the reliability of the electric system is highly influenced by internal propagation of defects in insulating components, a phenomenon known as treeing~\cite{bahadoorsingh2007role, buccella2023computational}. Electrical treeing structures loosely resemble real trees and are characterized by highly branched geometries with very long and thin channels, see~\cite{schurch2014imaging, schurch20193d}.  These channels evolve in time, due to internal Partial Discharges~\cite{raymond2015partial} inside non-thermalized plasmas, where electrons and ions are free to move, causing avalanche effects of multiplications of the charged particles~\cite{pankaj2017cold}.
    There are plenty of semi-empirical models in the literature ~\cite{nyanteh2011overview, jow1996stochastic} but still, to the best of the authors knowledge, there is lack of models based on first principles. Some examples can be found~\cite{callender2019plasma, villa2022towards}, but they are limited to either very simplified configurations or very small treeing structures corresponding to the first stages of propagation. Unfortunately, fully developed treeing geometries are so complex that even the creation of a coherent mesh is almost impossible to achieve. For these reasons, the use of simplified, one-dimensional appproximations is a more viable approach. Our aim is to adapt the techniques developed in~\cite{VILLA2017687, villa2017implicit, villa2015stability, villa2017simulation} for the simulation of treeing structures, and the first step in this direction is to develop drift and drift-diffusion solvers for 1D branched structures, describing the evolution of the concentration of charged particles in the electrical treeing. In particular, the volume concentration of charged species inside the channels follows a drift-diffusion equation, while the surface charge concentration on the interface between the defect and the external plasma a pure transport equation. In both cases the advection speed is directly dependent on the electric field acting on the domain. A crucial property that these solvers must satisfy is positivity of the solution, necessary in view of the future coupling of the described problem with the chemical effects~\cite{buccella2023computational} due to Partial Discharges and electron avalanche effect.

    The one-dimensional domain can be described as a network, made of interconnected nodes, with differential equations defined on its edges and proper conditions at intersections. A complete introduction on network structures can be found in~\cite{estrada2012structure} and~\cite{newman2018networks}. \\
    Further applications of problems on networks, with possibly non-linear drift, can be found in traffic flow on road networks~\cite{bressan2014flows, piccoli2006traffic}, gas flow in pipe networks~\cite{gottlich2005network, herty2010new}, supply chain models~\cite{herty2007existence}, water networks~\cite{hild2012real} and neuron models~\cite{wybo2015sparse}. For these problems, ad-hoc solvers were proposed~\cite{bretti2005numerical, goatin2020comparative, gyrya2019explicit, leugering2017domain}, based on Godunov schemes~\cite{godunov1959finite}, discrete velocities kinetic methods~\cite{aregba2004kinetic}, Finite Differences~\cite{strikwerda2004finite} and Domain Decomposition~\cite{lagnese2004domain}. Moreover, an extension of Finite Element Methods to differential equations on quantum graphs~\cite{berkolaiko2013introduction} is discussed in~\cite{arioli2017finite}.
    
    In this paper we focus on the solution of problems with linear advection term, without any additional assumption on the advection velocities on each branch of the domain. We introduce a numerical method that combines an extension of the Finite Volume scheme with upwind flux for the space discretization and implicit Euler for the time discretization, providing unconditional stability. We prove existence and uniqueness of the numerical solutions and the postivity of the scheme, meaning that, starting from a non-negative initial condition, at every time step the solution is non-negative. Moreover, we discuss the consistency of the numerical fluxes at graph nodes, also in presence of bifurcations with an arbitrary number of incoming and outgoing edges.
    
    The rest of this manuscript is organized as follows: in Section~\ref{section:ModelProblems} we present the structure of the domain and define the model problems on it; the numerical methods are introduced in Section~\ref{section:NumericalMethods} and the existence, uniqueness and positivity of their solution, together with consistency of the numerical fluxes, is investigated in Section~\ref{section:NumericalAnalysis}. Finally, we present in Section~\ref{section:Results} some tests of these methods on increasingly complex geometries, from a straight line to graphs with only one intersection node to the application to the geometry of a typical electrical tree.
	
	\section{Model problems}
	\label{section:ModelProblems}
	We will focus on time dependent problems on a finite time interval $[0,T], \ T\in\mathbb{R}$, and on a spatial domain represented by a one-dimensional, connected, simple, directed graph $\Lambda = \left(\mathcal{E},\mathcal{N}\right)$, consisting of a set of edges $\mathcal{E}=\{e_k\}_{k=1}^{n_e}$ and a set of nodes $\mathcal{N}=\{v_k\}_{k=0}^{n_v}$. Each edge is parametrized by its arc length parameter $l_k$ and normalized with a map $\pi_k \ : \ [0,1]\to e_k$. We suppose that the parametrization is opposite to the direction of the advective flux, in agreement with previous literature, and we say that an edge $e_k$ is \textit{incoming} to a node $v$ if $\pi_k(0) = v$, \textit{outgoing} if $\pi_k(1) = v$, namely if the flux on $e_k$ is directed towards $v$ or away from it, respectively. We denote by $\mathcal{E}_v^+$ and $\mathcal{E}_v^-$ the sets of incoming and outgoing edges of a node $v\in\mathcal{N}$:

    \begin{equation*}
		\mathcal{E}_v^+ := \{e_{j}\in\mathcal{E} \ : \ \pi_j(1) = v\},
	\end{equation*}
	
   \begin{equation*}
	 	\mathcal{E}_v^- := \{e_{j}\in\mathcal{E} \ : \ \pi_j(0) = v\}.
    \end{equation*}

	\noindent an by $\mathcal{E}_v = \mathcal{E}_v^+ \cup \mathcal{E}_v^-$ the set of all the edges intersecting at $v$. In an analogous way we can define the sets $\mathcal{E}^+_k$ and $\mathcal{E}^-_k$, $k=1,\dots,n_e$, of incoming edges into the inflow endpoint, and outgoing edges from the outflow node of an edge $e_k$, respectively:
    
    \begin{equation*}
		\mathcal{E}_k^+ := \{e_{j}\in\mathcal{E} \ : \ \pi_j(0) = \pi_k(1)\},
	\end{equation*}
	
    \begin{equation*}
        \mathcal{E}_k^- := \{e_{j}\in\mathcal{E}_k \ : \ \pi_j(1) = \pi_k(0)\}.
    \end{equation*}

	On such graph we can define the spatial derivative $\frac{\partial \cdot}{\partial s}$ on each edge as the derivative on each edge with respect to the parametrization.
	
	We consider a pure transport problem:\\
	
	Find $u\ :\ \Lambda\times[0,T]\longrightarrow \mathbb{R}$ such that
	
	\begin{equation}
		\dfrac{\partial u}{\partial t} + \dfrac{\partial }{\partial s}\left(c u\right) = f, \qquad \text{on \ } \Lambda\times(0,T];
		\label{eq:transport}
	\end{equation}
 
    \noindent with transport velocity $c: \ \Lambda\to[0,+\infty)$ and source term given by a function $f:\ \Lambda\times[0,T]\to\mathbb{R}$, and a drift-diffusion equation: \\
	
	Find $u\ :\ \Lambda\times[0,T]\longrightarrow \mathbb{R}$ such that

	\begin{equation}
		\dfrac{\partial u}{\partial t} - \dfrac{\partial}{\partial s} \left(\nu \frac{\partial u}{\partial s}\right) + \dfrac{\partial }{\partial s}\left(c u\right) = f, \qquad \text{on \ } \Lambda\times(0,T].
		\label{eq:diffusion_transport}
	\end{equation}

    \noindent with coefficients $c: \ \Lambda\to[0,+\infty)$ and $\nu: \ \Lambda\to[0,+\infty)$ and source term $f:\ \Lambda\times[0,T]\to\mathbb{R}$.
	
	\noindent The set of nodes $\mathcal{N}$ can be partitioned into two non-overlapping sets: $\mathcal{N} = \mathcal{N}_i \cup \mathcal{N}_b, \ \mathcal{N}_i\cap\mathcal{N}_b=\emptyset$, where we call $\mathcal{N}_i$ the set of \textit{internal} nodes, i.e. nodes shared by at least two segments, and $\mathcal{N}_b$ the set of \textit{boundary} nodes, i.e. connected to only one edge of the graph.\\
	On all the nodes in $\mathcal{N}_b$ we must impose boundary conditions for problems~\eqref{eq:transport} and~\eqref{eq:diffusion_transport}. For each problem we denote by $\mathcal{N}_s$ the set of \textit{Dirichlet} nodes, and by $\mathcal{N}_e$ the set of \textit{end nodes}, where we impose Neumann conditions:
	
	\begin{equation}
		\begin{cases}
			u = \bar{u}, &\text{on\ every\ }v\in\mathcal{N}_s, \\
			\dfrac{du}{ds}=0, &\text{on\ every\ }v\in\mathcal{N}_e.
		\end{cases}
		\label{eq:bc}
	\end{equation}
	
	\noindent We assume that the nodes where we impose Dirichlet conditions are \textit{sources}, meaning they have no incoming edges, and those with Neumann (or Robin) conditions are \textit{sinks}, with no outgoing edges. In the advection problem~\eqref{eq:transport}, we can also have sink nodes where no boundary condition is imposed, if they are outflow nodes, where outflow conditions are not necessary.\\
	Moreover, in both cases, we allow one node to be shared among more than two edges, and we call such a node a \textit{bifurcation}. We denote the set of bifurcations by $\mathcal{N}_{\text{bif}} \subset \mathcal{N}_i$. In each internal node there must be at least one incoming and one outgoing edge, in order to avoid mass accumulation there.

			
			

    We denote by $f_k$ the restriction $f\big|_{e_k}$ of a function $f:\ \Lambda\to\mathbb{R}$ to the edge $e_k,\ k=1,\dots,n_e$. In particular, we can write the velocity and concentration on each edge as $c_k = c\big|_{e_k}$ and $u_k = u\big|_{e_k}$, respectively. Then, the conservation of fluxes at a node $v\in\mathcal{N}$ is:

    \begin{equation*}
		\sum_{e_k\in\mathcal{E}_v^+} u_k c_k = \sum_{e_j\in\mathcal{E}_v^-} u_j c_j,
	\end{equation*}

	\noindent and the complete formulation of the transport problem~\eqref{eq:transport} on the network reads as follows:
	
	\begin{subnumcases}{}
		\dfrac{\partial u_k}{\partial t} + \dfrac{\partial}{\partial s}\left(c_k u_k\right) = f_k,\ &$ \forall k\in\{1,\dots,n_e\}, \ \text{in}\ [0,1]\times(0,T] $, 		\label{eq:graph:transport:begin}\\
      \sum_{e_k\in\mathcal{E}_v^+}c_k u_k = \sum_{e_j\in\mathcal{E}_v^-}c_j u_j, \ & $ \forall v\in \mathcal{N}_i,\ \text{in}\ (0,T] $, \\
		u_k(\pi_k(1),t) = \bar{u}_k(t), & $ \forall k\in\{1,\dots,n_e\} \ : \  \pi_k(1)\in\mathcal{N}_s,\ \forall t \in (0,T] $, \\
		u_k(x,0) = u^0(x), &$ \forall x\in[0,1], \ \forall k\in\{1,\dots,n_e\} $,
		\label{eq:graph:transport:end}
	\end{subnumcases}

	\noindent while the complete formulation of the drift-diffusion~\eqref{eq:diffusion_transport} problem on the network is:
	
	\begin{subnumcases}{}
			\dfrac{\partial u_k}{\partial t} - \dfrac{\partial}{\partial s}\left(\nu_k \frac{\partial u_k}{\partial s}\right) + \dfrac{\partial}{\partial s}\left(c_k u_k\right) = f_k, &$ \forall k\in\{1,\dots,n_e\}, \ \text{in}\ [0,1]\times(0,T] $, 
			\label{eq:graph:diffusion_transport:begin} \\
		      \sum_{e_k\in\mathcal{E}_v^+}c_k u_k = \sum_{e_j\in\mathcal{E}_v^-}c_j u_j, \ & $ \forall v\in \mathcal{N}_i,\ \text{in}\ (0,T] $, \\
			u_k(\pi_k(1),t) = \bar{u}_k(t), & $ \forall k\in\{1,\dots,n_e\} \ : \  \pi_k(1)\in\mathcal{N}_s,\ \forall t \in (0,T] $, \\
			\dfrac{\partial u_k}{\partial s}(\pi_k(0),t) = 0, & $ \forall k\in\{1,\dots,n_e\} \ : \  \pi_k(0)\in\mathcal{N}_e,\ \forall t \in (0,T] $,\\
			u_k(x,0) = u^0(x), &$ \forall x\in[0,1], \ \forall k\in\{1,\dots,n_e\} $.
		\label{eq:graph:diffusion_transport:end}
	\end{subnumcases}

    \noindent All the hypotheses introduced above are necessary for stating the well-posedness of these continuous problems on a graph. Indeed, according to \textcite{vonbelow1988classical}, the drift-diffusion problem~\eqref{eq:graph:diffusion_transport:begin}-~\eqref{eq:graph:diffusion_transport:end} admits a unique solution if the coefficients $\nu$ and $c$ belong to the space $\mathcal{C}^{\alpha,\alpha/2}(\Lambda\times[0,T]) $ of Holder continuous functions for some $\alpha$, while the existence of a unique solution of the transport problem~\eqref{eq:graph:transport:begin}-~\eqref{eq:graph:transport:end} is ensured by \textcite{banasiak2014asymptotic}.
    
    \begin{remark}
        In~\cite{banasiak2014asymptotic} the possibility of dealing with homogeneous Neumann conditions on an open graph is discussed: in this case, we would have injection of some quantity, whose concentration is represented by the unknown $u$, from the source nodes, and no outflow from the sinks, so we need a workaround in order to make the problem well-posed and avoid mass accumulation on the vertices. The authors propose to extend the graph by adding one extra vertex for each sink, connected to it by two edges going in opposite directions, as to create a cycle (see Figure~\ref{figure:extended}), and they prove that these additional vertices do not influence the behavior of the solution on the original graph, the flow on each appended subgraph is asymototically periodic in time and the edges incoming to the sinks will be eventually depleted as $t\to\infty$, or in a finite time if they are in the acyclic part of the graph. However, as we will see in Section~\ref{section:NumericalAnalysis}, this solution is not required in the discrete setting.
    \end{remark}

	\begin{figure}
		\centering
		\begin{tikzpicture}
            [decoration={markings, mark= at position 0.5 with {\arrow{stealth}}}]
			\filldraw (0,0) circle (2pt) node [xshift = .3cm, yshift = .3cm]{$v_2$};
			\filldraw (0,2) circle (2pt) node [xshift = .3cm]{$v_1$};
			\filldraw (-2,0) circle (2pt) node[xshift = .3cm, yshift = .3cm]{$v_3$};
			\filldraw (2,0) circle (2pt) node [xshift = -.3cm, yshift = .3cm]{$v_4$};
			\filldraw (-4,0) circle (2pt) node[xshift = -.3cm]{$v_5$};
			\filldraw (4,0) circle (2pt) node [xshift = .3cm]{$v_6$};
   
			\draw [postaction={decorate}, thick] (0,0) -- (-2,0) node [xshift = .4cm, yshift = 1cm]{};
			\draw [postaction={decorate}, thick] (0,0) -- (2,0) node [xshift = -.4cm, yshift = 1cm]{};
			\draw [postaction={decorate}, thick] (0,2) -- (0,0) node [xshift = -.4cm, yshift = 1cm]{};
   
			\draw [postaction={decorate}, thick, dashed] plot[smooth] coordinates {(4,0) (3,.5) (2,0)};
			\draw [postaction={decorate}, thick, dashed] plot[smooth] coordinates {(2,0) (3,-.3) (4,0)};
			\draw [postaction={decorate}, thick, dashed] plot[smooth] coordinates {(-4,0) (-3,.5) (-2,0)};
			\draw [postaction={decorate}, thick, dashed] plot[smooth] coordinates {(-2,0) (-3,-.3) (-4,0)};
		\end{tikzpicture}
		\caption{Simple one-dimensional domain with three edges and one bifurcation node $v_2$. If we impose homogeneous Neumann conditions on the set of sinks $\mathcal{N}_e = \{v_3,\ v_4\}$ the graph should be extended by also including the nodes $v_5$ and $v_6$ and the additional dashed edeges, according to~\cite{banasiak2014asymptotic}.}
		\label{figure:extended}
	\end{figure}
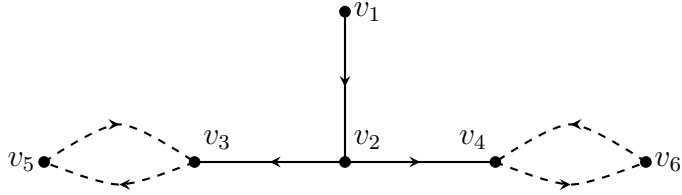
 
	\section{Numerical methods}
	\label{section:NumericalMethods}
	
	We start by discretizing the two model problems \eqref{eq:transport} and \eqref{eq:diffusion_transport} in time by introducing a set of equispaced instants $\{t_l\ :\ l=0,\dots,N\}$  on $[0,T]$, such that $t_0 = 0$, $t_{N} = T$ and $t_{l+1} = t_{l} + \Delta t,\ \forall l\in\{0,\dots,N-1\}$. We denote by $u^l$ the solution $u$ at time $t_l,\ l=0,\dots,N$ and apply an Implicit Euler discretization scheme on both problems:
	
	\begin{subequations}
		\begin{align}
			\dfrac{u^{l+1} - u^{l}}{\Delta t} + \dfrac{d}{ds}\left(u^{l+1} \ c\right) &= f^{l+1}, \qquad \text{on\ } \Lambda,\ \forall l\in\{0,\dots,N-1\},
			\label{eq:semid:transport}\\
			\dfrac{u^{l+1} - u^{l}}{\Delta t} - \dfrac{d}{ds}\left(\nu \frac{d u^{l+1}}{ds}\right) + \dfrac{d}{ds}\left(u^{l+1}\ c\right) &= f^{l+1}, \qquad \text{on\ } \Lambda,\ \forall l\in\{0,\dots,N-1\}.
			 \label{eq:semid:diffusion_transport}
		\end{align}
	\end{subequations}

	\noindent The choice of an implicit solver is due to the final aim of defining methods for the simulation of thermal plasma without restrictions on the time step length, due to stability issues \cite{villa2017implicit, villa2021uncoupled}.
	
	
	
	\subsection{Space discretization}
	\label{section:NM:SpaceDiscretization}
	
	We complete the discretization of equations \eqref{eq:semid:transport} and \eqref{eq:semid:diffusion_transport} by applying a Finite Volume (FV) Method on the 1D domain $\Lambda$, represented by a set of connected segments with possible branches.\\
%
%
%
%
%
%
	Based on the concepts discussed in the previous section, we will introduce a numerical scheme for the approximate solution of the aforementioned PDEs. We start by the discretization of a linear transport problem in Section~\ref{section:NM:Transport}, followed by the diffusion equations in Section~\ref{section:NM:Diffusion}, which introduces the solver for drift-diffusion problems, finally presented in Section~\ref{section:NM:DiffusionTransport}.

	\subsubsection{Transport equation}
	\label{section:NM:Transport}
		
	Consider time-dependent transport equations in the form of~\eqref{eq:transport}, semi-discretized in time as~\eqref{eq:semid:transport}.\\
	For simplicity, as a partition for $\Lambda$ we consider the set of segments representing its edges $\{e_k\}_{k=1}^{n_e}$ and we complete the discretization of the equation by integrating~\eqref{eq:semid:transport} on each segment:
	
	\begin{equation}
		\int_{e_k}\dfrac{u^{l+1}-u^{l}}{\Delta t} = -\int_{e_k} \dfrac{d}{ds}\left(u^{l+1}c\right), \quad \forall k\in\{1,\dots,n_e\},\ \forall l\in\{0,\dots,N-1\}.
		\label{eq:fv:transport:1}
	\end{equation}
	
	\noindent Notice that a further partition of the edges can be trivially introduced by considering more internal nodes on each edge.
	
	On each segment we approximate the value of $u$ as a constant, given by its integral mean. From now on, with a little abuse of notation, we will denote by $u_k^{l}$ the constant approximation of the solution on each segment $e_k$ at time $t_l$: $u^l\big|_{e_k} \approx u_k^l$, $\forall k\in\{1,\dots,n_e\},\ l\in\{0,\dots,N\}$. We choose to approximate the right hand side integral as the sum of numerical fluxes at the endpoints of each segment, with an upwind scheme. Integrating the first derivative of the flux $F(u)=uc$ on the edge $e_k$, we would obtain the difference between its value on the endnodes $v_k^2=\pi_k(0)$ and $v_k^1=\pi_k(1)$. Since the numerical solution may have jump discontinuities on nodes separating two neighboring edges, we have to define a numerical flux $\tilde{F}$ to set its value there.
    According to the upwind method, we set the flux on the fist endnode $v_k^1$ equal to the upstream flux $\tilde{F}(u_v) = c_k u_v$, with concentration $u$ evaluated in the incoming neighbors of $e_k$ and on the second $v_k^2$ equal to the upstream flux $\tilde{F}(u_k) = c_k u_k$ on $e_k$, thus obtaining the following approximation:
	
	\begin{equation}
		\int_{e_k} \dfrac{d}{ds}\left(u^{l+1}c\right) \approx c_k
		\left(u_k^{l+1} - u_v^{l+1}\right),
		\quad \forall k\in\{1,\dots,n_e\},\ \forall l\in\{0,\dots,N\},
		\label{eq:fv:transport:2}
	\end{equation}

	\noindent where $u_v^{l+1}$ indicates the concentration $u$ at time $t_{l+1}$ from the incoming neighbors of $e_k$ through the vertex $v=\pi_k(1)$. We will define it as a function of the concentrations on the neighboring edges of $e_k$, based on the flux conservation condition at the junction node $v$.
 
	We can substitute~\eqref{eq:fv:transport:2} into~\eqref{eq:fv:transport:1} and $u^l$ with its constant approximation $u_k^l$ on each segment to obtain the following expression:
	
	\begin{equation}
		\dfrac{|e_k|}{\Delta t} u_k^{l+1}+ c_k
		\left(u_k^{l+1} - u_v^{l+1}\right) = \dfrac{|e_k|}{\Delta t} u_k^{l}, \quad \forall k\in\{1,\dots,n_e\},\quad \forall l\in\{0,\dots,N\}.
		\label{eq:transport:numerical_flux}
	\end{equation}
	
	

	\noindent On the node $v=\pi_k(1)$ we can impose the conservation of fluxes and determine the portion of mass transported from the incoming edges $e_i$ to the outgoing edges $e_k$. The numerical fluxes $\tilde{F}_v^+$ and $\tilde{F}_v^-$, entering and leaving the node are given by the sum of fluxes on its incoming and outgoing edges, respectively:
	
	\begin{equation}
		\begin{aligned}
			\tilde{F}_v^+ & = \sum_{e_j\in\mathcal{E}_v^+} \tilde{F}_j = \sum_{e_j\in\mathcal{E}_v^+} c_j u_j, \\
			\tilde{F}_v^- & = \sum_{e_j\in\mathcal{E}_v^-} \tilde{F}_j = \sum_{e_j\in\mathcal{E}_v^-} c_j u_v.
		\end{aligned}
		\label{eq:transport:fluxes}
	\end{equation}


    
	\noindent If we balance the fluxes~\eqref{eq:transport:fluxes}, we obtain:
	
	\begin{equation}
        u_v^{l+1} = \sum_{e_j\in\mathcal{E}_v^+}\left( \dfrac{c_j}{\sum_{e_k\in\mathcal{E}_v^-} c_k} u_j^{l+1} \right).
		\label{eq:flux_balance:discrete}
	\end{equation}

	

	
	
	\noindent Then, the value of $u$ at a node $v$ is given by a weighted sum of its value on the incoming edges, where the weights are given by
	
	\begin{equation*}
		w_{j} = \dfrac{c_j}{\sum_{e_l\in\mathcal{E}_v^-} c_l},\quad \forall j\ :\ e_j \in \mathcal{E}_v^+, \ \forall v\in\mathcal{N}\setminus\mathcal{N}_e.
	\end{equation*}

	\noindent If we substitute the expression of $u_v^{l+1}$ obtained by this relation in equation~\eqref{eq:transport:numerical_flux}, we end up with the following numerical scheme:

	\begin{equation}
		\left(\dfrac{|e_k|}{\Delta t} + c_k\right) u_k^{l+1} - c_k \sum_{e_i\in\mathcal{E}_k^+} w_{i} u_i^{l+1} = \dfrac{|e_k|}{\Delta t} u_k^l, \quad \forall k\in\{1,\dots,n_e\},\ \forall l\in\{0,\dots,N\},
		\label{eq:discr:transport}
	\end{equation}
	
	\noindent which can be written in matrix form as:
	
	\begin{equation}
		M_\text{tr}\mathbf{u}^{l+1}=\mathbf{g}^{l}, \quad \forall l\in\{0,\dots,N-1\},
		\label{eq:alg:transport}
	\end{equation}
	\noindent where $\mathbf{g}^{l}=\left[\frac{|e_1|}{\Delta t}u_1^{l},\dots,\frac{|e_{n_e}|}{\Delta t}u_{n_e}^{l}\right]^T\in\mathbb{R}^{n_e}$ is a vector depending on the unknowns at the previous time $t_l$, and $M\in\mathbb{R}^{n_e \times n_e}$ is a square matrix with entries given by:

    \begin{equation}
		(M_\text{tr})_{ki} =
		\begin{cases}
			\dfrac{|e_k|}{\Delta t} + c_k, & \text{if}\ i=k,\ k=1,\dots,n_e, \\
			-c_k w_{i}, &
			\text{if}\ i\neq k,\ i,k=1,\dots,n_e,\ e_i\in\mathcal{E}_k^+, \\
			0, & \text{otherwise}.
		\end{cases}
		\label{eq:transport:matrix:A}
	\end{equation}



	\subsubsection{Diffusion equation}
	\label{section:NM:Diffusion}

	Let us introduce a numerical scheme for the diffusion term, which will then be combined with the method presented in the previous section.\\
	Consider the following steady equation:
	\begin{equation}
		- \dfrac{d}{ds}\left(\nu \dfrac{du}{ds}\right) = 0,\quad \text{on}\ \Lambda,
		\label{eq:diffusion}
	\end{equation}

	\noindent completed by the Dirichlet and Neumann boundary conditions~\ref{eq:bc} introduced in Section~\eqref{section:ModelProblems}.
 
	We consider first, for clarity of exposition, the simple case where $\Lambda$ is made of three segments, one having vertices numbered as $v_0$ and $v_1$ and the other two branching from vertex $v_1$, as in Figure~\ref{figure:branch}. We denote by $A$ the vertex with index 0, $I$ the vertex with index 1 and by $B$ and $C$ the endpoints of the two edges of the branch. Then, the segments composing $\Lambda$ are $e_1 = AI$, $e_2=IB$ and $e_3=IC$.
	
	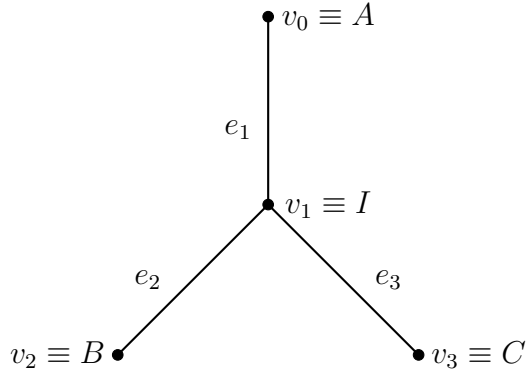
\begin{figure}
		\centering
		\begin{tikzpicture}
			\filldraw (0,0) circle (2pt) node [xshift = .8cm] {$v_1\equiv I$};
			\filldraw (0,2.5) circle (2pt) node [xshift = .8cm] {$v_0\equiv A$};
			\filldraw (-2,-2) circle (2pt) node[xshift = -.8cm] {$v_2\equiv B$};
			\filldraw (2,-2) circle (2pt) node [xshift = .8cm] {$v_3\equiv C$};
			\draw [thick] (0,0) -- (-2,-2) node [xshift = .4cm, yshift = 1cm] {$e_2$};
			\draw [thick] (0,0) -- (2,-2) node [xshift = -.4cm, yshift = 1cm] {$e_3$};
			\draw [thick] (0,2.5) -- (0,0) node [xshift = -.4cm, yshift = 1cm] {$e_1$};
		\end{tikzpicture}
		\caption{Simple one-dimensional domain with three edges and one bifurcation node $v_1$. The set of edges is $\mathcal{E}=\{e_1=AI,e_2=IB,e_3=IC\}$. We set Dirichlet boundary conditions on $A$ and Neumann boundary conditions on the set of end nodes $\mathcal{N}_e=\{B,C\}$. The only bifurcation node is $I$.}
		\label{figure:branch}
	\end{figure}

	On this domain, problem~\eqref{eq:diffusion}-\eqref{eq:bc} reads as follows:
	
	\begin{equation}\nonumber
		\begin{dcases}
			- \dfrac{d}{ds}\left(\nu \dfrac{du}{ds}\right) &= 0,\quad \text{on}\ \Lambda, \\
			u(A) &= \bar{u}, \\
			\dfrac{du}{ds}(B) =\dfrac{du}{ds}(C) &= 0.
		\end{dcases}
	\end{equation}
	
	\noindent If we now integrate the first equation of this system on each segment applying the fundamental theorem of calculus, we obtain:
	
	\begin{equation}\nonumber
		\begin{aligned}
			-\int_{e_k}\dfrac{d}{ds}\left(\nu \dfrac{du}{ds}\right)&=\int_{e_k} 0, \\
			\nu(\pi_k(0))\dfrac{du}{ds}(\pi_k(0))-\nu(\pi_k(1))\dfrac{du}{ds}(\pi_k(1)) &= 0.
		\end{aligned}
	\end{equation}
	
	\noindent Let us denote by $\nu_{k,j}$ the value of the diffusion coefficient defined on edge $e_j$, evaluated at node $v=\pi_k(j), \ j =0,1$. Furthermore, we can approximate 
	the first derivative of $u$ at a node by Finite Differences, similarly to the Two-Point Flux Approximation (TPFA) approach \cite{eymard2000finite}.
	In particular, considering two adjacent and collinear segments, as in Figure~\ref{figure:segment}, we would approximate its value on the node between $e_k$ and $e_{k+1}$ as
	
	\begin{equation*}
		\dfrac{du}{ds} \approx T_{k,k+1} \left(u_{k+1} - u_k\right), \quad \forall k=1,\dots,n_e-1,
		\label{eq:diffusion:flux_approx}
	\end{equation*}

	\noindent where $T_{k,k+1}$ denotes the inverse of the distance between the centers of $e_k$ and $e_{k+1}$, and in this case it is simply
	
	\begin{equation}
		T_{k,k+1} = \left(\dfrac{|e_{k}| + |e_{k+1}|}{2}\right)^{-1}.
		\label{eq:distanceT}
	\end{equation}

    \noindent If the two segments are not collinear, the approximation of the first derivative remains the same and the distance $T_{k,j}^{-1}$ between the centers of consecutive elements is considered as distance on the graph, as in equation~\eqref{eq:distanceT}. 
	If a node belongs to one edge only, then it must be either a source node $v\in\mathcal{N}_s$ or an end node $v\in\mathcal{N}_e$. In the first case, 
	
	\begin{equation}
		\dfrac{du}{ds}(v) \approx T_{k,1} \left(u_k - \bar{u}_k\right), \quad \forall v\in\mathcal{N}_s,
    \label{eq:diff:der:2}
	\end{equation}

	\noindent where we denote by $T_{k,1} = \dfrac{2}{|e_k|}$ the inverse of distance from the center of the segment $e_k$ to the source node. 
	Otherwise, if it is an end node, we impose homogeneous Neumann boundary conditions and we can directly substitute
	
	\begin{equation}
		\dfrac{du}{ds}(v) = 0, \quad \forall v\in\mathcal{N}_e.
    \label{eq:diff:der:3}
	\end{equation}

	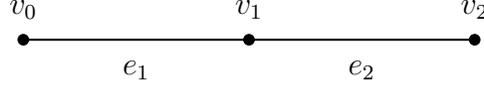
\begin{figure}
		\centering
		\begin{tikzpicture}
			\filldraw (0,0) circle (2pt) node [yshift = .4cm] {$v_0$};
			\filldraw (3,0) circle (2pt) node [yshift = .4cm] {$v_1$};
			\filldraw (6,0) circle (2pt) node [yshift = .4cm] {$v_2$};
			\draw [thick] (0,0) -- (3,0) node [xshift = -1.5cm, yshift = -.4cm] {$e_1$};
			\draw [thick] (3,0) -- (6,0) node [xshift = -1.5cm, yshift = -.4cm] {$e_2$};
		\end{tikzpicture}
		\caption{Segment discretized by contiguous elements without bifurcations. The set of edges is $\mathcal{E}=\{e_1 = AB, \ e_2=BC\}$ and there are no bifurcations. We impose Dirichlet boundary condition on the source node $v_0$ and homogeneous Neumann on the sink $v_2$.}
		\label{figure:segment}
	\end{figure}

    \noindent Finally, in our case, a node may be shared by more than two edges. Hence, we will introduce the following generalization.\\
    Consider the node $I$ in the graph of Figure~\ref{figure:branch}, connected to three edges. Here we separately consider the three contributions and then balance the flux at the intersection $v_1 = I$. If we assume that the orientations of the edges $e_1, \ e_2 \ \text{and}\ e_3$ are $\overrightarrow{AI},\ \overrightarrow{IB}\ \text{and}\ \overrightarrow{IC}$, respectively, then $v_1 = \pi_1(0) = \pi_2(1) = \pi_3(1)$. In this case the only incoming edge is $e_1$, while outgoing ones are $e_2$ and $e_3$. If we assume that there is no mass accumulation at the nodes, then the fluxes are balanced, meaning that the sum of incoming and outgoing fluxes there is null:
	
	\begin{equation}
	   \nu_1(v_1)\dfrac{du_1}{ds}(v_1) = \nu_2(v_1)\dfrac{du_2}{ds}(v_1) + \nu_3(v_1) \dfrac{du_3}{ds}(v_1).
		\label{eq:flux_balance}
	\end{equation}
	
	\noindent We can consider the bifurcation nodes as boundary points of subgraphs of $\Lambda$ not containing bifurcations. On these additional boundary points we impose Dirichlet boundary conditions:
	
	\begin{equation}
		u_k(v) = u(v), \quad \forall v\in \mathcal{N}_{\text{bif}}.
		\label{eq:bc:bif}
	\end{equation}

	\noindent Clearly we do not know the value of the function $u$ at these nodes, but we can compute it making use of the flux balance condition~\eqref{eq:flux_balance}. This means that we need to consider an additional set of discrete unknowns $\{u_{n_e+j}\}_{j=1}^{n_{\text{bif}}}$, where $n_{\text{bif}}=|\mathcal{N}_{\text{bif}}|$ is the number of bifurcation nodes in $\Lambda$, such that $u_{n_e+j} = u(v_j), \ \forall j=1,\dots,n_{\text{bif}}$.\\
    %
    The approximation of derivatives on $v\in\mathcal{N}_\text{bif}$ is given by:

    \begin{equation}
        \dfrac{d u_k}{ds}(v_j) \approx
        \begin{dcases}
            T_{k,n_e+j}(u_k - u_{n_e+j}), & \text{if\ } v_j = \pi_k(1), \\
            T_{k,n_e+j}(u_{n_e+j} - u_k), & \text{if\ } v_j = \pi_k(0).
        \end{dcases}
    \label{eq:diff:der:4}
    \end{equation}
    
	\noindent Here we have denoted by $T_{k,n_e+j}=\dfrac{2}{|e_k|}$ the inverse of the distance between the center of element $e_k$ and the bifurcation node $v_j$ along the graph.
	

	Combining~\eqref{eq:diff:der:2},~\eqref{eq:diff:der:3} and~\eqref{eq:diff:der:4}, we obtain a linear system of the form
	
	\begin{equation}
		A_{\text{diff}}\mathbf{u} = \mathbf{0},
		\label{eq:diffusion:matrix}
	\end{equation}

	\noindent where $ A_{\text{diff}} $ is an $(n_e+n_{\text{bif}})\times(n_e+n_{\text{bif}})$ matrix with the following block structure:
	
	\begin{equation}
		A_{\text{diff}} =
		\begin{bmatrix}
			B & C \\
			C^T & D
		\end{bmatrix},
		\label{eq:diffusion:matrix:B}
	\end{equation}
	
	\noindent with $B\in \mathbb{R}^{n_e\times n_e}$, $C\in\mathbb{R}^{n_e \times n_{\text{bif}}}$ and $D\in\mathbb{R}^{n_{\text{bif}} \times n_{\text{bif}}}$ diagonal matrix. The entries of these matrices are:
	
	\begin{equation}
		\begin{array}{lll}
			C_{k,j} &=
			\begin{dcases}
				-\dfrac{\nu_{k,j}}{|e_k|} T_{k,j}, &\text{if\ } \pi_k(1)=v_j\in\mathcal{N}_\text{bif} \text{\ or\ } \pi_k(0)=v_j\in\mathcal{N}_\text{bif}, \\
				0, &\text{otherwise},
			\end{dcases}\quad
			&
			\begin{split}
				&\forall k=1,\dots,n_e, \\
				&\forall j=1,\dots,n_\text{bif};
			\end{split}\\
	
			D_{k,j} &=
			\begin{dcases}
				- \sum_{i=1}^{n_e} C_{i,j}, &\text{if\ } k=j, \\
				0, &\text{otherwise},
			\end{dcases}\quad
			& \forall k,j=1,\dots,n_\text{bif}; \\
	
			B_{k,j} &=
			\begin{dcases}
				-\dfrac{\nu_{k,j}}{|e_k|} T_{k,j}, &\text{if\ } e_j\in \mathcal{E}_k, v = \ e_j\cap e_k \notin \mathcal{N}_\text{bif}, \\
				0, &\text{otherwise}
			\end{dcases}\quad
			&
			\begin{split}
				\forall k,j=1,\dots,n_e, \\
				j\neq k;
			\end{split}\\
			
			B_{k,k} &=
			\begin{dcases}
				- \sum_{j=1, \ j\neq k}^{n_e} B_{k,j} - \sum_{j=n_e+1}^{n_\text{bif}} C_{k,j}, & \text{if\ } \pi_k(1)\notin\mathcal{N}_s,\\
				- \sum_{j=1, \ j\neq k}^{n_e} B_{k,j} - \sum_{j=n_e+1}^{n_\text{bif}} C_{k,j} + \nu_{k,1}T_{k,1}, &\text{otherwise}
			\end{dcases}\quad
			& \forall k=1,\dots,n_e.
		\end{array}
	\label{diffusion:B}
	\end{equation}

	\noindent The vector $\mathbf{u}\in\mathbb{R}^{n_e+n_\text{bif}}$ introduced in equation~\eqref{eq:diffusion:matrix} contains the unknowns, ordered as the $n_e$ values of $u$ at the edges $e_k,\ k=1,\dots,n_e,\ \mathbf{\hat{u}}=[u_1,\dots,u_{n_e}]^T$, and at the bifurcation points $\mathbf{u}_B=[u_{n_e+1},\dots,u_{n_e+n_\text{bif}}]^T$:
	
	\begin{equation*}
		\mathbf{u} =
		\begin{bmatrix}
			\mathbf{\hat{u}} \\
			\mathbf{u}_B
		\end{bmatrix}
		.
	\end{equation*}
	
	
	
	
 
	\subsubsection{drift-diffusion equation}
	\label{section:NM:DiffusionTransport}

    By combining equations~\eqref{eq:alg:transport} and~\eqref{eq:diffusion:matrix} we can discretize the drift-diffusion equation~\eqref{eq:diffusion_transport}, containing a system of Ordinary Differential Equations in the unknowns $\mathbf{\hat{u}}$ and $\mathbf{u}_B$. Note that in the discretization of the transport term the only considered unknowns are $\mathbf{\hat{u}}$, representing the approximate values of $u$ on the edges. Therefore, we need to augment the corresponding matrix in order to be able to sum it to the diffusion part:
	

	
	
	
	
	
	\begin{equation}
		M =
		\begin{bmatrix}
			I+M_\text{tr} & 0 \\
			0 & 0
		\end{bmatrix}
		+
		\Delta t A_\text{diff},
		\label{eq:diffusion_transport:matrix:M}
	\end{equation}

    \noindent where $M_\text{tr}\in\mathbb{R}^{n_e\times n_e}$ and $A_\text{diff}\in\mathbb{R}^{(n_e+n_{\text{bif}})\times(n_e+n_{\text{bif}})}$ are defined in equations~\eqref{eq:transport:matrix:A} and~\eqref{eq:diffusion:matrix:B}, respectively, while the vector of unknowns and the right-hand side are defined as in Section~\ref{section:NM:Diffusion}, equation~\eqref{eq:diffusion:matrix}.

    \section{Properties of the numerical scheme}
	\label{section:NumericalAnalysis}
	In this section we focus on proving some properties of the matrices defined in equations~\eqref{eq:transport:matrix:A},~\eqref{eq:diffusion:matrix:B} and~\eqref{eq:diffusion_transport:matrix:M}, providing existence and uniqueness of the solutions and positivity of the associated linear systems. We will show these results based on some properties of Z-matrices and M-matrices \cite{PLEMMONS1977175}, recalled for readers' convenience:
	
	\begin{definition}
		A matrix $A\in\mathbb{R}^{N\times N}$ is a \textbf{Z-matrix} if $A\in Z^{N\times N}$, with \\
		\begin{equation}\nonumber
			Z^{N\times N}:=\{M\in\mathbb{R}^{N\times N}\ :\ M_{ij}\leq 0 \ \forall i,j \ \text{such\ that}\ i\neq j,\ 1\leq i,j \leq N\}.
		\end{equation}
	\end{definition}
		
	\begin{definition}
		A matrix $M\in\mathbb{R}^{N\times N}$ is an \textbf{M-matrix} if $\ \exists s\in\mathbb{R},\ B\in\mathbb{R}^{N\times N}$ such that $M=sI-B$, with $B_{ij}\geq 0$, $1\leq i,j\leq N$, and $s\geq \rho(B)$, where $\rho(B)$ denotes the spectral radius of $B$.
	\end{definition}
	
	\subsection{Transport equation}
	\label{section:NA:Transport}
	Let us start by analyzing the discrete transport equation introduced in Section~\ref{section:NM:Transport}. Observe that the matrix $M_\text{tr}$, defined in equation~\eqref{eq:transport:matrix:A}, has all non-negative diagonal entries and non-positive extra-diagonal entries. As a consequnce,

    \begin{property}
        The matrix $M_\text{tr}$ defined in equation~\eqref{eq:transport:matrix:A} is a Z-matrix.
        \label{property:transport:Zmatrix}
    \end{property}
 
    \noindent Moreover, in the following we will prove that $M_\text{tr}$ has all positive column sums. These properties allow us to prove that it is invertible and that, in equation~\eqref{eq:alg:transport}, $\mathbf{g}\geq 0 \Rightarrow \mathbf{u}\geq 0$. Consequently the system $M_\text{tr} \mathbf{u} = \mathbf{g}$ admits a unique solution, which is positive if the source term is. In fact,

    \begin{property}
        The matrix $M_\text{tr}$ defined in equation~\eqref{eq:transport:matrix:A} has positive column sums.
        \label{property:A:colsum}
    \end{property}

    \begin{proof}
        Fixed any column $i\in\{1,\dots,n_e\}$ of $M_\text{tr}$, the sum of its elements is given by

        \begin{equation*}
            \sum_{k=1}^{n_e} (M_\text{tr})_{ki} = (M_\text{tr})_{ii} +
            \sum_{
            \scriptsize
            \begin{split}
                k=1\\
                k\neq i
            \end{split}
            }^{n_e} (M_\text{tr})_{ki} = 
            \dfrac{|e_i|}{\Delta t} + c_i - \sum_{k\in\mathcal{E}_v^-} c_k w_i =
            \dfrac{|e_i|}{\Delta t} + c_i - c_i \dfrac{\sum_{k\in\mathcal{E}_v^-} c_k}{\sum_{k\in\mathcal{E}_v^-} c_k} = \dfrac{|e_i|}{\Delta t} >0,
        \end{equation*}

        \noindent where $v=\pi_k(0)$.
        
    \end{proof}
	
	\noindent This result allows us to prove the following theorem:
	
	\begin{theorem}
		Let $M_\text{tr}$ be the $n_e\times n_e$ matrix defined by~\eqref{eq:transport:matrix:A}. Then, $M_\text{tr}$ is invertible and the system $M_\text{tr}\mathbf{u}=\mathbf{g}$ is positive.
		\label{theorem:transport}
	\end{theorem}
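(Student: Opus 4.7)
The plan is to derive the theorem from the two preceding properties via standard M-matrix theory. Property 1 says $M_\text{tr}$ is a Z-matrix; Property 2 says every column sum is strictly positive (in fact equal to $|e_i|/\Delta t$). Together with the fact that the diagonal entries $|e_i|/\Delta t + c_i$ are strictly positive, these two facts should give that $M_\text{tr}$ is a non-singular M-matrix, from which both invertibility and the sign-preservation property follow at once.

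First I would transpose. Because the off-diagonal entries of $M_\text{tr}$ are non-positive, the positive column sum identity $\sum_{k}(M_\text{tr})_{ki} = |e_i|/\Delta t > 0$ rewrites as
\begin{equation*}
(M_\text{tr})_{ii} - \sum_{k\neq i} |(M_\text{tr})_{ki}| = \frac{|e_i|}{\Delta t} > 0,
\end{equation*}
which means $M_\text{tr}^T$ is strictly diagonally dominant by rows with positive diagonal. A Gershgorin argument applied to $M_\text{tr}^T$ then places every eigenvalue in the union of closed discs centered at the positive numbers $(M_\text{tr})_{ii}$ with radii $\sum_{k\neq i}|(M_\text{tr})_{ki}|$ strictly smaller than the centers; hence $0$ lies outside every disc, no eigenvalue of $M_\text{tr}^T$ is zero, and $M_\text{tr}$ is invertible.

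For the positivity part I would invoke the equivalence between (a) being a Z-matrix that is strictly diagonally dominant with positive diagonal and (b) being a non-singular M-matrix, and then (c) the standard fact that a non-singular M-matrix has $M_\text{tr}^{-1}\geq 0$ entrywise (see \cite{PLEMMONS1977175}). Since these characterizations are symmetric under transposition for the Z-property, the strict column dominance established above is enough. Consequently, if $\mathbf{g}^l\geq 0$ then $\mathbf{u}^{l+1}=M_\text{tr}^{-1}\mathbf{g}^l\geq 0$, which is the asserted positivity of the scheme in~\eqref{eq:alg:transport}.

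The main obstacle is essentially bookkeeping rather than conceptual: one must be careful that Property 2 concerns columns while the cleanest Gershgorin statement concerns rows, so either one transposes (as above) or one appeals directly to column-wise versions of strict diagonal dominance and of the M-matrix characterization. No issues arise from boundary or bifurcation nodes in this argument, since they have already been absorbed into the entries of $M_\text{tr}$ through the weights $w_j$, and the positive column sum identity proven in Property~\ref{property:A:colsum} holds uniformly across all columns.
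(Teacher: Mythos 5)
Your proposal is correct and follows essentially the same route as the paper: both arguments combine the Z-matrix property with the positive column sums of Property~\ref{property:A:colsum}, pass to the transpose $M_\text{tr}^T$, invoke the nonsingular M-matrix characterization of~\cite{PLEMMONS1977175} to get inverse-positivity, and transpose back to conclude $\mathbf{u}=M_\text{tr}^{-1}\mathbf{g}\geq 0$. The only difference is cosmetic: you make the strict column diagonal dominance explicit and add a Gershgorin argument for invertibility, which the paper instead obtains directly from the cited M-matrix theorem.
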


	\begin{proof}
		Since $M_\text{tr}$ is a Z-matrix (Property~\ref{property:A:colsum}), so is its transpose $M_\text{tr}^T$ is. According to Theorem 1 of \textcite{PLEMMONS1977175}, this property is necessary and sufficient to state that $M_\text{tr}^T$ is a nonsingular M-matrix, which is also equivalent to: $M_\text{tr}^T$ is inverse-positive, i.e. $\exists (M_\text{tr}^T)^{-1}$, $((M_\text{tr}^T)^{-1})_{ij}\geq 0$ and $M_\text{tr}^{-1}\neq 0$.\\
        By definition of inverse, we have that $I = M_\text{tr}^T (M_\text{tr}^T)^{-1} $, where $I$ denotes the $n_e\times n_e$ identity matrix. If we transpose both sides of the equations, we get 

        \begin{equation*}
            I = \left(M_\text{tr}^T (M_\text{tr}^T)^{-1}\right)^T = \left((M_\text{tr}^T)^{-1}\right)^T M_\text{tr}.
        \end{equation*}

        \noindent This is only possible if $M_\text{tr}$ is invertible and its inverse is $M_\text{tr}^{-1} = \left((M_\text{tr}^T\right)^{-1})^T$. Then, also $M_\text{tr}$ is inverse-positive, i.e. $(M_\text{tr}^{-1})_{ij}\geq 0,\quad \forall i,j\in\{1,\dots,n_e\}$, and, as a consequence,
		
		\begin{equation*}
			\mathbf{u}_k=\sum_{i=1}^{n_e} (M_\text{tr}^{-1})_{ki}\mathbf{f}_i\geq 0, \quad \forall k\in\{1,\dots,n_e\},
		\end{equation*}
		
		\noindent because they are linear combinations of non-negative quantities.
	\end{proof}

    \noindent Finally, we can prove consistency of the proposed numerical flux on the graph nodes.
	
	\begin{theorem}
		The flux approximation~\eqref{eq:fv:transport:2} is consistent at every internal graph node.
		\label{theorem:transport:consistency}
	\end{theorem}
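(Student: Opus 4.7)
The approach is to interpret consistency at a graph node as the discrete analog of the continuous flux-conservation relation $\sum_{e_j\in\mathcal{E}_v^+} c_j u_j = \sum_{e_k\in\mathcal{E}_v^-} c_k u_k$, which the continuous model requires to hold at every internal node. Concretely, at each $v\in\mathcal{N}_i$, I will check that the upwind numerical flux defined in~\eqref{eq:fv:transport:2} balances across $v$: the sum of the numerical fluxes reaching $v$ through its incoming edges equals the sum of those leaving $v$ along its outgoing edges.

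The proof will consist of a short direct computation. Using the upwind prescription, the numerical flux leaving $v$ along an outgoing edge $e_k\in\mathcal{E}_v^-$ is $c_k u_v^{l+1}$, while the flux arriving at $v$ through an incoming edge $e_j\in\mathcal{E}_v^+$ is $c_j u_j^{l+1}$. I will then substitute the definition~\eqref{eq:flux_balance:discrete} of $u_v^{l+1}$ into the total outgoing flux and use that the common denominator $\sum_{e_l\in\mathcal{E}_v^-} c_l$ cancels against the outer factor $\sum_{e_k\in\mathcal{E}_v^-} c_k$:
\begin{equation*}
\sum_{e_k\in\mathcal{E}_v^-} c_k\, u_v^{l+1} \;=\; \Bigl(\sum_{e_k\in\mathcal{E}_v^-} c_k\Bigr)\sum_{e_j\in\mathcal{E}_v^+}\frac{c_j}{\sum_{e_l\in\mathcal{E}_v^-} c_l}\,u_j^{l+1} \;=\; \sum_{e_j\in\mathcal{E}_v^+} c_j u_j^{l+1},
\end{equation*}
which coincides with the total flux entering $v$, i.e.\ with the discrete analog of the continuous balance condition. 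The argument holds for every $v\in\mathcal{N}_i$, including bifurcations with arbitrary cardinalities of $\mathcal{E}_v^{\pm}$.

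There is no serious obstacle: the result is essentially true by construction, since the node value $u_v^{l+1}$ in~\eqref{eq:flux_balance:discrete} was defined precisely so that flux balance at $v$ is enforced. The only point that deserves care is the correct identification of which datum is upstream on each side of the node, namely that in the upwind flux on an outgoing edge the upstream data are supplied by the incoming edges of $v$ (so that the weighted combination~\eqref{eq:flux_balance:discrete} appears on the outgoing side of the equality), whereas on an incoming edge the upstream value is taken on the edge itself. Once this bookkeeping is fixed, the algebraic identity above yields the theorem immediately.
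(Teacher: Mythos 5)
There is a genuine gap: what you prove is not consistency but conservativity. Your computation shows that the discrete outgoing flux $\sum_{e_k\in\mathcal{E}_v^-} c_k u_v^{l+1}$ equals the discrete incoming flux $\sum_{e_j\in\mathcal{E}_v^+} c_j u_j^{l+1}$, which indeed holds by construction since $u_v^{l+1}$ in~\eqref{eq:flux_balance:discrete} was defined precisely to enforce this balance; the argument is therefore circular and establishes only that the scheme conserves mass at the node. Consistency of the flux approximation~\eqref{eq:fv:transport:2} is a different statement: one must insert the \emph{exact} solution into the numerical flux~\eqref{eq:transport:fluxes} and show that the resulting quantity reproduces the exact flux $F_v=(cu)(s_v)$ at the node, i.e.\ that the truncation error $\tau_v = F_v - F_v^\star$ vanishes (or is controlled). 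Your proposal never compares the numerical flux with the exact flux of the PDE, so it does not address the quantity whose smallness the theorem asserts.

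The paper's proof does exactly this comparison: it evaluates the upwind fluxes on the exact solution, uses the flux balance to express the exact nodal value as $u(s_v)=\sum_{e_i\in\mathcal{E}_v^+}c(s_i)u(s_i)\big/\sum_{e_i\in\mathcal{E}_v^-}c(s_i)$, and finds
\begin{equation*}
\tau_v \;=\; \left(\dfrac{c(s_v)}{\sum_{e_i\in\mathcal{E}_v^-}c(s_i)}-1\right)F_v^\star,
\end{equation*}
which is \emph{not} zero in general because no continuity of $c$ across the node is assumed. The essential ingredient of the theorem, missing from your proposal, is the compatibility condition~\eqref{eq:th:tr:c}, $c(s_v)=\sum_{e_i\in\mathcal{E}_v^-}c(s_i)$, which fixes the (otherwise ambiguous) value of the velocity at the node and makes $\tau_v=0$. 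Without a definition of $c$ at the node and this condition, the claimed consistency cannot be concluded; note also that this condition is used later in the paper (test case TC5), so it is not a cosmetic detail.
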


	\begin{proof}
		Denote the exact flux at an internal node $v\in\mathcal{N}_i$ by $F_v = (cu)(s_v)$, where $s_v\in\Lambda$ is the position of the node on the graph and consider the corresponding incoming and outgoing numerical fluxes $\tilde{F}^+_v$ and $\tilde{F}^-_v$  defined in equation~\eqref{eq:transport:fluxes}.\\		
		We introduce the evaluation of the numerical fluxes with respect to the exact solution:
		
		\begin{equation*}
			\tilde{F}^{\star,+}_v = \sum_{ e_i\in\mathcal{E}_v^+} (cu)(s_i),
		\end{equation*}
	
		\begin{equation*}
			\tilde{F}^{\star,-}_v = \sum_{ e_i\in\mathcal{E}_v^-} c(s_i)u(s_v),
		\end{equation*}	
				
		\noindent and the approximate flux at the bifurcation node should satisfy $F^\star_v = F^{\star,+}_v = F^{\star,-}_v$, by balance of fluxes.\\
		Then, we obtain the following expression for the exact solution at the node:
		
		\begin{equation*}
			u(s_v) = \dfrac{\sum_{ e_i\in\mathcal{E}_v^+} c(s_i)u(s_i)}{\sum_{ e_i\in\mathcal{E}_v^-} c(s_i)},
		\end{equation*}
	
		\noindent which can be substituted in the definition of the exact flux, to obtain:
		
		\begin{equation*}
			F_v = c(s_v)\dfrac{\sum_{ e_i\in\mathcal{E}_v^+} c(s_i)u(s_i)}{\sum_{ e_i\in\mathcal{E}_v^-} c(s_i)} = \dfrac{c(s_v)}{\sum_{ e_i\in\mathcal{E}_v^-} c(s_i)} F^\star_v.
		\end{equation*}
	
		\noindent Finally, the truncation error can be computed as:
		
		\begin{equation*}
			\tau_v = F_v - F^\star_v = \left( \dfrac{c(s_v)}{\sum_{ e_i\in\mathcal{E}_v^-} c(s_i)} -1  \right) F^\star_v = \left( c(s_v) - \sum_{ e_i\in\mathcal{E}_v^-} c(s_i) \right) u(s_i).
		\end{equation*}

        \noindent We have no hypotheses on the continuity of the speed $c$ at nodes, so it may present jumps at those points. We need to introduce the following compatibility condition, setting the value of the velocity at nodes as:
		
		\begin{equation}
			c(s_v) = \sum_{ e_i\in\mathcal{E}_v^-}c(s_i).
			\label{eq:th:tr:c}
		\end{equation}
  
        \noindent where $s_i$ indicates the position of the midpoint of the edge $e_i$ on $\Lambda$. 
        Notice that, if the node $v$ has only one outgoing edge, the velocity at the node is set equal to the outgoing velocity.\\
        By this definition of the speed at nodes, we get $\tau_v=0$ and we can conclude that the the method is strongly consistent at the graph nodes.
				
	\end{proof}
	
	\begin{remark}
        The proposed numerical scheme does not take into account the outflow conditions on sinks, but it can be adapted to enforce bondary conditions, such as Robin or homogeneous Neumann conditions, by imposing them on the edges connected to the sinks. The approximation error on these edges should decrease with the dimension of the space discretization and consistency still hold.
	\end{remark}
	
	\subsection{Diffusion equation}
	\label{section:NA:Diffusion}
	Let us now analyze the diffusion matrix $A_\text{diff}$, defined by equation~\eqref{eq:diffusion:matrix:B}. This is a block $\left(n_e + n_\text{bif}\right)\times\left(n_e + n_\text{bif}\right)$ matrix, with the following properties:
	
	\begin{property}
		The matrix $A_\text{diff}$ defined in equation~\eqref{eq:diffusion:matrix:B} is a Z-matrix;
		\label{property:B:Zmatrix}
	\end{property}
	
	\begin{property}
		The matrix $A_\text{diff}$ defined in equation~\eqref{eq:diffusion:matrix:B} is symmetric.
		\label{property:B:symmetric}
	\end{property}

	\noindent These two properties follow from the definition of the matrix. Moreover, we can prove the following result:

	\begin{property}
		The row sums of the elements of the matrix $A_\text{diff}$ defined in equation~\eqref{eq:diffusion:matrix:B} are always zero but for a number $n=|\mathcal{N}_s|>0$ of rows, and the same holds by columns.	
		\label{property:B:row_sum}
	\end{property}
	\begin{proof}
		Let us start by the last $n_\text{bif}$ rows, consisting of the blocks
		
		\begin{equation*}
			\begin{bmatrix}
				C^T & D
			\end{bmatrix}
			,
		\end{equation*}

		\noindent where the only non-null element of $D$ on every row is on its diagonal and is equal, by definition (equation~\eqref{diffusion:B}), to the opposite of the column sum of $C$. Then,
		
		\begin{equation*}
            \begin{split}
			     \sum_{k=1}^{n_e+n_\text{bif}} (A_\text{diff})_{jk} = \sum_{k=1}^{n_e} C^T_{jk} + \sum_{k=1}^{n_\text{bif}} D_{jk} = \sum_{k=1}^{n_e} C_{kj} + D_{kk} = \sum_{k=1}^{n_e} C_{kj} -\sum_{k=1}^{n_e} & C_{kj} = 0, \\
            &j=n_e+1,\dots,n_e+n_\text{bif}.
            \end{split}
		\end{equation*}
		
		\noindent Among the first $n_e$ rows we find the non-null row sums, corresponding to the elements on whose endpoints Dirichlet boundary conditions are imposed. Indeed, by definition (equation~\eqref{diffusion:B}), the diagonal elements of $A_{\text{diff}}$ are defined as the opposite of the sum of the non-diagonal entries of each row, plus an additive term in case of Dirichlet conditions imposed on an endnode: 
		
		\begin{multline*}
			\sum_{k=1}^{n_e+n_\text{bif}} (A_\text{diff})_{jk} = (A_\text{diff})_{kk} +
            \sum_{
                \scriptsize
            \begin{split}
                k=1\\
                k\neq j
            \end{split}
            }^{n_e+n_\text{bif}} (A_\text{diff})_{jk} = B_{kk} +
            \sum_{
                \scriptsize
            \begin{split}
                k=1\\
                k\neq j
            \end{split}
            }^{n_e+n_\text{bif}} (A_\text{diff})_{jk} = \\
			=
			\begin{dcases}
				- \sum_{
                \scriptsize
                \begin{split}
                    k=1\\
                    k\neq j
                \end{split}
                }^{n_e+n_\text{bif}} (A_\text{diff})_{jk} +
                \sum_{
                \scriptsize
                \begin{split}
                    k=1\\
                    k\neq j
                \end{split}
                }^{n_e+n_\text{bif}} (A_\text{diff})_{jk} = 0, & \text{if\ } \pi_k(1)\notin\mathcal{N}_s, \\
				- \sum_{
                \scriptsize
                \begin{split}
                    k=1\\
                    k\neq j
                \end{split}
                }^{n_e+n_\text{bif}} (A_\text{diff})_{jk} + \nu_{k,1} T_{k,1} +
                \sum_{
                \scriptsize
                \begin{split}
                    k=1\\
                    k\neq j
                \end{split}
                }^{n_e+n_\text{bif}} (A_\text{diff})_{jk} = \nu_{k,1} T_{k,1} > 0, & \text{otherwise}.
			\end{dcases}
		\end{multline*}
	
		\noindent Since the only rows with positive sum are the ones corresponding to edges with Dirichlet boundary conditions on one endpoint, the number of rows of $A_\text{diff}$ with non-null sum is equivalent to the number of such edges $|\mathcal{N}_s|$, which was assumed to be non-null.\\
        Finally, since $A_\text{diff}$ is symmetric (Property~\ref{property:B:symmetric}), the same holds by columns.
	\end{proof}

	\noindent A consequence of Property~\ref{property:B:row_sum} concerns the diagonal dominance of the matrix $A_\text{diff}$:
	
	\begin{property}
		The matrix $A_\text{diff}$ defined in equation~\eqref{eq:diffusion:matrix:B} is diagonally dominant on every row and strictly diagonally dominant on a number $n=|\mathcal{N}_s|$ of rows.
	\label{property:B:diagonally_dominant}
	\end{property}

	\noindent These rows correspond to edges where Dirichlet boundary conditions are imposed.
	
	In order to prove existence and uniqueness of the solution and positivity of the discrete diffusion problem, we also need to introduce the \textit{SC property}, and show that it is satisfied by $A_\text{diff}$.
	
	\begin{property}
		The $(n_e+n_\text{bif})\times(n_e+n_\text{bif})$ square matrix $A_\text{diff}$ defined in \eqref{eq:diffusion:matrix:B} satisfies the \textbf{SC property} (Definition 6.2.7 of \cite{horn2012matrix}):\\
		$\forall p,q\in\{1,\dots,n_e+n_\text{bif}\}$, with $p\neq q$, there is a sequence of distinct integers $\{k_i\}_{i=1}^{m}\subseteq\{1,\dots,n_e+n_\text{bif}\}$, such that $k_1 = p, \ k_m = q$ and $(A_\text{diff})_{k_1 k_2},\ (A_\text{diff})_{k_2 k_3}, \ \dots,\ (A_\text{diff})_{k_{m-1} k_m}$ are all nonzeros.
		\label{property:B:SC}
	\end{property}

	\begin{proof}
		Le us analyze the non-zero entries of the matrix:
        $(A_\text{diff})_{ik}\neq 0 \Leftrightarrow k=i$ or $k$ and $i$ represent neighboring edges or $k$ and $i$ represent a bifurcation and one of the edges connected to it.\\
		As a consequence, $\exists k_1,\dots,k_m\in\{1,\dots,n_e+n_\text{bif}\}$ distinct indices such that $(A_\text{diff})_{k_1 k_2},\ (A_\text{diff})_{k_2 k_3},\\ \dots,\ (A_\text{diff})_{k_{m-1} k_m}\ \neq 0 \ \Leftrightarrow \exists$ a path on the 1D domain $\Lambda$ connecting the edge or bifurcation $k_1$ to the edge or bifurcation $k_m$ without passing more than once through the same edge or bifurcation. This condition is true for every couple of edges/bifurcation points because the domain $\Lambda$ is a completely connected graph by construction. \\
		Thus, we can conclude that $A_\text{diff}$ satisfies the SC property.
	\end{proof}

	\noindent Thanks to these properties, we can prove the positivity of the numerical scheme proposed in Section~\ref{section:NM:Diffusion} and the existence and uniqueness of the solution to the discrete problem.

	\begin{theorem}
		Let $A_\text{diff}$ be the $(n_e+n_\text{bif})\times(n_e+n_\text{bif})$ matrix defined in equation~\eqref{diffusion:B}. Then, $A_\text{diff}$ is invertible and the problem \eqref{eq:diffusion:matrix} is positive.
        \label{theorem:diffusion}
	\end{theorem}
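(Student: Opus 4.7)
The plan is to assemble the properties already proven for $A_\text{diff}$ into a single application of a standard M-matrix characterization, and then read off both invertibility and inverse-positivity from it. First I would observe that Properties~\ref{property:B:Zmatrix},~\ref{property:B:diagonally_dominant}, and~\ref{property:B:SC} together give everything needed: $A_\text{diff}$ is a Z-matrix whose diagonal entries are positive, it is weakly diagonally dominant on every row with strict dominance on the $|\mathcal{N}_s|>0$ rows associated with edges touching a Dirichlet source, and the SC property says exactly that $A_\text{diff}$ is irreducible (SC property is equivalent to the directed graph associated to the nonzero off-diagonal entries being strongly connected).

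Next I would invoke the classical theorem on irreducibly diagonally dominant matrices (e.g. Theorem~6.2.27 of~\cite{horn2012matrix}, or equivalently the characterization in Theorem~1 of~\textcite{PLEMMONS1977175}): an irreducible Z-matrix with positive diagonal, weakly diagonally dominant on every row and strictly dominant on at least one row, is a nonsingular M-matrix. Plugging in the three properties above yields that $A_\text{diff}$ is a nonsingular M-matrix, which in particular gives invertibility.

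Inverse-positivity then comes for free from the same equivalence list used in the proof of Theorem~\ref{theorem:transport}: a nonsingular M-matrix is inverse-positive, namely $(A_\text{diff}^{-1})_{ij}\geq 0$ for all $i,j$. Consequently, for any right-hand side $\mathbf{g}\geq 0$ coming from a non-negative initial condition together with the Dirichlet data $\bar{u}$ that enters the right-hand side through the correction $\nu_{k,1} T_{k,1}\bar{u}_k$, each component of $\mathbf{u}=A_\text{diff}^{-1}\mathbf{g}$ is a non-negative combination of non-negative quantities and is therefore non-negative.

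I do not expect any genuine obstacle here: all three ingredients (Z-matrix structure, irreducibility via the SC property, and the single strictly dominant row) have already been isolated as Properties, and the only real work is to cite the correct irreducible-diagonally-dominant characterization of nonsingular M-matrices and to note that inverse-positivity is among the equivalent conditions. The one point that merits explicit mention is the role of the hypothesis $|\mathcal{N}_s|>0$: without at least one Dirichlet source node, every row sum of $A_\text{diff}$ would be zero, $\mathbf{1}$ would lie in the kernel, and both invertibility and strict positivity would fail. This is exactly where the boundary-condition assumption on the continuous problem feeds into the discrete well-posedness.
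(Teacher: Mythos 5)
Your proposal is correct and follows essentially the same route as the paper: both combine the Z-matrix structure (Property~\ref{property:B:Zmatrix}), diagonal dominance with at least one strictly dominant row (Property~\ref{property:B:diagonally_dominant}), and the SC property (Property~\ref{property:B:SC}, i.e.\ irreducibility) to conclude that $A_\text{diff}$ is a nonsingular M-matrix, hence inverse-positive, which gives both invertibility and positivity of the discrete problem. The only difference is cosmetic: you invoke the irreducibly-diagonally-dominant characterization directly, whereas the paper first applies Corollary~6.2.9 of~\cite{horn2012matrix} to $A_\text{diff}+\tilde{D}$ for every positive diagonal $\tilde{D}$ and then uses that equivalent condition from Theorem~1 of~\cite{PLEMMONS1977175} to reach the same M-matrix conclusion.
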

	
	\begin{proof} 
		Since the matrix $A_\text{diff}$ satisfies the SC Property~\ref{property:B:SC} and Property~\ref{property:B:diagonally_dominant} ensures that it is also diagonally dominant on each row and strictly diagonally dominant on at least one row, then it is invertible (see Better Corollary 6.2.9 of \cite{horn2012matrix}).

         Moreover, these conditions are also satisfied by $A_\text{diff}+\tilde{D}$, where $\tilde{D}$ denotes a general diagonal $(n_e+n_\text{bif})\times(n_e+n_\text{bif})$ matrix $\tilde{D}$ with all positive diagonal entries. Thus, $A_\text{diff}+\tilde{D}$ is non-singular $\forall\tilde{D}$, by Better Corollary 6.2.9 of~\cite{horn2012matrix}, and consequently, since $A_\text{diff}$ is a Z-matrix (Property~\ref{property:B:Zmatrix}), this implies (Theorem 1 of~\cite{PLEMMONS1977175}) that $A_\text{diff}$ is a nonsignular M-matrix and also inverse-positive.\\
		This means that $(A_\text{diff}^{-1})_{ik}\geq 0 \ \forall i,k=1,\dots,n_e+n_\text{bif}$ and $A_\text{diff}^{-1}\neq 0$. Then, $\mathbf{u}=A_\text{diff}^{-1}\mathbf{g}\geq 0$, $\forall \mathbf{g}\geq 0$.
	\end{proof}
	
	\noindent Let us now explicitly write the system for the vectors $\mathbf{\hat{u}}$ and $\mathbf{u}_B$ of unknowns on the edges and on the bifurcation points, respectively, starting from the discrete problem~\eqref{eq:diffusion:matrix} in matrix form:
	
	\begin{equation*}
		\begin{cases}
			B\mathbf{\hat{u}} + C\mathbf{{u}_B} = \mathbf{f}, \\
			C^T \mathbf{\hat{u}} + D\mathbf{{u}_B} =0.
		\end{cases}
	\end{equation*}

	\noindent Since $D$ is invertible (diagonal matrix with all positive entries), we can solve the second equation for $\mathbf{u}_B$ and obtain the following system:

	\begin{equation*}
		\begin{cases}
			(B-CD^{-1}C^T)\mathbf{\hat{u}} =\mathbf{f}, \\
			\mathbf{{u}_B} = -D^{-1}C^T\mathbf{\hat{u}}.
		\end{cases}
	\end{equation*}

	\noindent Observe that $B-CD^{-1}C^T$ is nothing but the Schur complement of $D$, which we denote by $A/D$:

	\begin{equation*}
		\begin{cases}
			(A/D)\mathbf{\hat{u}} = \mathbf{f}, \\
			\mathbf{{u}_B} = -D^{-1}C^T\mathbf{\hat{u}}.
		\end{cases}
	\end{equation*}
	
	\noindent Since $A$ and $D$ are nonsingular, we know that $A/D$ is nonsingular as well (Theorem 1.2 of \cite{zhang2006schur}) and that $\det(A/D)=\dfrac{\det(A)}{\det(D)}$, by Schur's formula.
	
	Our goal is now to prove that also the Schur complement $A/D$ is invertible and positive. To this aim, we need to recall the definition of the inertia of an Hermitian matrix and an associated result (see~\cite{zhang2006schur}):
		
	\begin{definition}
		We call \textbf{inertia} of an Hermitian matrix $A$ the triplet $In(A):=(p(A),q(A),z(A))$, where $p(A)$ is the number of positive eigenvalues of $A$, $q(A)$ the number of negative eigenvalues of $A$ and $z(A)$ the multiplicity of the 0 eigenvalue.
	\end{definition}
	
	\begin{theorem}
		Theorem 1.6 of \cite{zhang2006schur}:\\
		Let $A$ be an Hermitian matrix and $A_{11}$ a nonsingular principal submatrix of $A$. Then,
		\begin{equation}\nonumber
			In(A)=In(A_{11})+In(A/A_{11}).
		\end{equation}
	\end{theorem}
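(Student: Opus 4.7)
The plan is to prove the Haynsworth inertia additivity formula by exhibiting an explicit congruence that block-diagonalizes $A$, and then invoking Sylvester's law of inertia. Since the statement requires only that $A$ be Hermitian and $A_{11}$ be a nonsingular principal submatrix, I may assume without loss of generality (after a permutation similarity, which preserves Hermitian-ness and inertia) that $A$ has the block form
\begin{equation*}
A=\begin{bmatrix} A_{11} & A_{12} \\ A_{12}^{*} & A_{22} \end{bmatrix},
\end{equation*}
with $A_{11}^{*}=A_{11}$ and $A_{22}^{*}=A_{22}$. The Schur complement is $A/A_{11} = A_{22}-A_{12}^{*}A_{11}^{-1}A_{12}$, which is itself Hermitian by direct inspection.

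The first step is to produce the block $LDL^{*}$-type factorization. Set
\begin{equation*}
L = \begin{bmatrix} I & 0 \\ A_{12}^{*}A_{11}^{-1} & I \end{bmatrix},
\qquad
D = \begin{bmatrix} A_{11} & 0 \\ 0 & A/A_{11} \end{bmatrix}.
\end{equation*}
A direct block multiplication shows that $A = L\,D\,L^{*}$. Since $L$ is unit lower block-triangular it is nonsingular, so this identity exhibits $A$ as congruent to $D$. The key point, which relies crucially on $A$ being Hermitian, is that the same matrix $L$ appears on the left and $L^{*}$ on the right.

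The second step is to apply Sylvester's law of inertia, which states that congruent Hermitian matrices have the same inertia; hence $\mathrm{In}(A)=\mathrm{In}(D)$. The third step is to observe that for a block diagonal Hermitian matrix, the spectrum is the union (with multiplicity) of the spectra of the diagonal blocks, so
\begin{equation*}
\mathrm{In}(D) = \mathrm{In}(A_{11}) + \mathrm{In}(A/A_{11}),
\end{equation*}
where the sum is component-wise on the triple $(p,q,z)$. Combining these two identities gives the stated formula.

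I do not expect serious obstacles. The only subtlety is verifying the factorization $A = L D L^{*}$ correctly, which is a routine block computation exploiting the Hermitian relation $A_{21}=A_{12}^{*}$; once this is in place, Sylvester's law and the block-diagonal inertia formula do the rest. If one does not want to assume Sylvester's law as known, a short self-contained argument is available: for any Hermitian $A$ and nonsingular $P$, the matrix $PAP^{*}$ has the same number of positive/negative/zero eigenvalues as $A$ because the quadratic form $x^{*}Ax$ becomes $y^{*}(PAP^{*})y$ under the invertible change of variables $y=(P^{*})^{-1}x$, so the signatures on maximal positive/negative-definite subspaces are preserved.
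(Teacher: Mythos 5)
Your proof is correct: the factorization $A=LDL^{*}$ with $L=\begin{bmatrix} I & 0\\ A_{12}^{*}A_{11}^{-1} & I\end{bmatrix}$ and $D=\mathrm{diag}(A_{11},\,A/A_{11})$ checks out, the permutation bringing $A_{11}$ to the leading position is a congruence (indeed a unitary one), and Sylvester's law plus the block-diagonal inertia count give the additivity. Note, however, that the paper does not prove this statement at all: it is quoted verbatim as Theorem 1.6 of the cited Schur complement monograph and used as a black box to analyze $A_{\mathrm{diff}}/D$. So there is no internal argument to compare against; what you have written is the classical Haynsworth proof, essentially the one given in the cited source, and it is a perfectly valid self-contained substitute for the citation.
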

	
	\noindent We can now prove the following result:
	
	\begin{theorem}
		Let $A$ be the $(n_e+ n_\text{bif})\times(n_e+ n_\text{bif})$ matrix defined in equation~\eqref{eq:diffusion:matrix:B} and $A/D$ the Schur complement of its south-east $n_\text{bif}\times n_\text{bif}$ block $D$. Then, $A/D$ is invertible and the system $(A/D)\mathbf{u}=\mathbf{f}$ is positive.
		\label{theorem:diffusion:shur}
	\end{theorem}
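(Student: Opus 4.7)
The plan is to separate the two claims. Invertibility of $A/D$ is essentially a restatement of what was already recalled immediately before the theorem via Schur's identity; the real content is the componentwise positivity of $(A/D)^{-1}$, which I would obtain not by reproving the M-matrix machinery for $A/D$ from scratch, but by identifying $(A/D)^{-1}$ as the leading principal block of $A_\text{diff}^{-1}$ and transferring to it the inverse-positivity of $A_\text{diff}$ already proved in Theorem~\ref{theorem:diffusion}.

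For invertibility I would first verify that $D$ itself is nonsingular, so that the Schur complement is well defined. From the definition in~\eqref{diffusion:B}, every bifurcation node $v_j$ is, by construction, an endpoint of at least one edge $e_i$, hence at least one $C_{i,j}$ is strictly negative and $D_{jj}=-\sum_i C_{i,j}>0$. Since $D$ is diagonal, $\det D>0$. Combined with $\det A_\text{diff}\neq 0$ from Theorem~\ref{theorem:diffusion}, Schur's formula $\det(A/D)=\det(A_\text{diff})/\det(D)$ yields $\det(A/D)\neq 0$.

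For the positivity of the system, the key identity is the classical block inversion formula, valid whenever $D$ and $A/D$ are both nonsingular:
\begin{equation*}
A_\text{diff}^{-1}=\begin{bmatrix}(A/D)^{-1} & -(A/D)^{-1}CD^{-1}\\ -D^{-1}C^T(A/D)^{-1} & D^{-1}+D^{-1}C^T(A/D)^{-1}CD^{-1}\end{bmatrix}.
\end{equation*}
Thus the upper-left $n_e\times n_e$ block of $A_\text{diff}^{-1}$ coincides with $(A/D)^{-1}$. By Theorem~\ref{theorem:diffusion}, $A_\text{diff}^{-1}\geq 0$ entrywise, so in particular $(A/D)^{-1}\geq 0$ entrywise. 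Consequently, for every non-negative right-hand side $\mathbf{f}\geq 0$, the solution $\mathbf{\hat{u}}=(A/D)^{-1}\mathbf{f}$ is a non-negative linear combination of non-negative numbers, which is the claimed positivity of $(A/D)\mathbf{\hat{u}}=\mathbf{f}$.

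The only genuinely non-routine step is the preliminary check that $D$ is nonsingular with strictly positive diagonal: both Schur's formula and the block-inverse identity used above tacitly require it. Once this is in place the proof reduces to an elementary algebraic identity combined with the already established inverse-positivity of $A_\text{diff}$, and there is no need to re-derive Z- or M-matrix properties for $A/D$ itself.
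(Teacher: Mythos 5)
Your proof is correct, but it follows a genuinely different route from the paper. The paper first shows that $A/D$ is positive definite via an inertia argument (Theorem 1.6 of the Schur complement reference, using that $A_\text{diff}$ and $D$ are positive definite), then explicitly inspects the entries of $CD^{-1}C^T$ to verify that $A/D=B-CD^{-1}C^T$ is a Z-matrix with positive diagonal, and finally invokes the Plemmons equivalences (positive eigenvalues $\Leftrightarrow$ nonsingular M-matrix $\Leftrightarrow$ inverse-positive) to conclude. You instead take the invertibility from Schur's determinant identity $\det(A/D)=\det(A_\text{diff})/\det(D)$ — after correctly checking that $D$ is diagonal with strictly positive entries, a point the paper asserts without proof — and obtain inverse-positivity directly from the block-inversion formula, which exhibits $(A/D)^{-1}$ as the leading $n_e\times n_e$ block of $A_\text{diff}^{-1}$, whose entrywise nonnegativity was already established in Theorem~\ref{theorem:diffusion}. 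Your argument is shorter and avoids re-deriving any Z-/M-matrix or inertia machinery for the Schur complement; its only prerequisites are the nonsingularity of $D$ and the conclusions of Theorem~\ref{theorem:diffusion}. What the paper's longer route buys is additional structural information about $A/D$ itself — symmetry, positive definiteness, sign pattern (it is a Stieltjes-type matrix) — which is not needed for the statement but may be of independent use; for the theorem as stated, your derivation is complete and sound.
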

	
	\begin{proof}
		The matrices $A$ and $D$ are both positive definite, because they have positive inverse and a diagonal with strictly positive entries. Then, $In(A)=(n_e\times n_\text{bif}, 0, 0)$, $In(D)=(n_\text{bif},0,0)$ and $In(A/D)=In(A)-In(D)=(n_e,0,0)$, by the previous theorem. Then, $A/D$ only has positive eigenvalues, and therefore it is positive definite.
		
		Let us now inspect the entries of the matrix $CD^{-1}C^T$.\\
		The non-diagonal ones are positive because they are given sums of non-negative quantities:
		\begin{equation*}
				(CD^{-1}C^T)_{ik}=\sum_{h=1}^{n_\text{bif}} C_{ih}\dfrac{C_{kh}}{D_{hh}}\geq 0 \ \forall i,k = 1,\dots, n_e,\ i\neq k,
		\end{equation*}
		\noindent and the same holds for diagonal ones:
		\begin{equation*}
				(CD^{-1}C^T)_{ii} = \sum_{h=1}^{n_\text{bif}}\dfrac{C_{ih}^2}{D_{hh}}
				\begin{cases}
					>0 & \text{if\ } i \text{\ is\ connected\ to\ a\ bifurcation}, \\
					0 & \text{otherwise};
				\end{cases}
		\end{equation*}
		\noindent indeed, $(D^{-1}C^T)_{ik}=\dfrac{1}{D_{ii}}(C^T)_{ik}=\dfrac{C_{ki}}{D_{ii}}\leq 0\ \forall i=1,\dots,n_\text{bif},\ k=1,\dots,n_e$ and $C_{ih}\leq 0 \ \forall i=1,\dots,n_\text{bif},\ k=1,\dots,n_e$.
		
		\noindent Then,

        \begin{equation*}
            (B-CD^{-1}C^T)_{ik} =
            B_{ik} - (CD^{-1}C^T)_{ik}
            \leq 0, \quad \forall i,k=1,\dots,n_e,\ i\neq k, 
        \end{equation*}

        \noindent because $B_{ik}\leq 0$ and $(CD^{-1}C^T)\geq 0$, $\forall i,k=1,\dots,n_e,\ i\neq k$, and

        \begin{multline*}
            (B-CD^{-1}C^T)_{ii} =
            B_{ii} - (CD^{-1}C^T)_{ii} \geq
            -\sum_{\scriptsize
            \begin{split}
                j=1\\
                j\neq i
            \end{split}
            }^{n_e} B_{i,j} - \sum_{j=1}^{n_\text{bif}} C_{i,j}
            - \sum_{j=1}^{n_\text{bif}} \dfrac{C_{i,j}^2}{D_{jj}} = \\
            = -\sum_{\scriptsize
            \begin{split}
                j=1\\
                j\neq i
            \end{split}
            }^{n_e} B_{i,j}
            - \sum_{j=1}^{n_\text{bif}} C_{ij} \dfrac{D_{jj} - C_{i,j}}{D_{jj}}
            >0, \quad \forall i=1,\dots,n_e,
        \end{multline*}

        \noindent because $\sum_{j=1,\ j\neq i}^{n_e} B_{i,j} \leq0$, $\forall i\in\{1,\dots,n_e\}$, and $D_{jj} - C_{i,j} > 0$, $C_{ij} < 0$ and $D_{jj} > 0$, $\forall i\in\{1,\dots,n_e\}$, $\forall j\in\{1,\dots,n_\text{bif}\}$.

		
		Consequently, $A/D=B-CD^{-1}C^T$ is a Z-matrix and has positive diagonal entries. Thus, the following conditions are equivalent (Theorem 1 of \cite{PLEMMONS1977175}):
		
		\begin{itemize}
			\item $A/D$ has all positive eigenvalues;
			\item $A/D$ is a nonsingular M-matrix;
			\item $A/D$ is inverse-positive. 
		\end{itemize}
		
    \noindent As a consequence, $A/D$ is invertible and, since it is inverse-positive, $\mathbf{u}=(A/D)^{-1}\mathbf{f}\geq 0$, i.e. the system $(A/D)\mathbf{u}=\mathbf{f}$ is positive.	
	\end{proof}
	\noindent Finally, we can prove that the flux approximation is consistent at graph nodes by following a similar procedure as in Theorem 7.1 of \cite{eymard2000finite} for the  upwind flux with TPFA on the diffusion term.
 
	\begin{theorem}
	If the diffusion coefficient is globally continuous on $\Lambda$ and $\nu_k\in\mathcal{C}^1(e_k), \ \forall k\in\{1,\dots,\mathcal{N}_e\}$, then the flux approximation~\eqref{eq:diffusion:flux_approx} is consistent at every internal node of $\Lambda$.
	\label{theorem:diff:consistency}
	\end{theorem}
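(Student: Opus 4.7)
The plan is to adapt the standard consistency argument for the TPFA scheme (Theorem~7.1 of~\cite{eymard2000finite}) to the graph setting, treating separately a regular internal node (shared by two collinear edges) and a bifurcation node, since at a bifurcation the auxiliary unknown is determined implicitly by the discrete flux balance.

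Let $v$ be an internal node at position $s_v$ on $\Lambda$ and let $u$ denote the exact solution, which I will assume to be $\mathcal{C}^2$ on each edge --- a property that follows from the $\mathcal{C}^1$ regularity of $\nu$ on each edge combined with smoothness of the data. The truncation error at $v$ is defined as the difference between the exact flux $\nu(s_v)\frac{du}{ds}(s_v)$ and the numerical flux obtained by substituting into~\eqref{eq:diffusion:flux_approx} the midpoint values of $u$ on each incident edge (and, in the bifurcation case, the exact nodal value $u(s_{v_j})$ in place of the auxiliary unknown $u_{n_e+j}$).

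I would first handle a regular internal node $v = \pi_k(0) = \pi_{k+1}(1)$ between two consecutive edges $e_k,e_{k+1}$. Taylor expansion of $u$ around $s_v$ along each edge gives
\begin{equation*}
u(s_{k+1}) - u(s_k) = \frac{du}{ds}(s_v)\,\frac{|e_k|+|e_{k+1}|}{2} + O(h^2),
\end{equation*}
where $s_k, s_{k+1}$ are the midpoints of the two edges and $h = \max\{|e_k|,|e_{k+1}|\}$. Multiplying by $T_{k,k+1} = 2/(|e_k|+|e_{k+1}|)$ produces $\frac{du}{ds}(s_v) + O(h)$, and invoking the global continuity of $\nu$ to replace $\nu_{k,k+1}$ by $\nu(s_v)+O(h)$ reduces the truncation error at $v$ to $O(h)$, establishing consistency.

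Next, for a bifurcation $v_j$ incident to a family of edges $\{e_k\}_{k\in I_j}$, the same Taylor expansion on each incident edge around $s_{v_j}$ shows that $\nu_{k,n_e+j} T_{k,n_e+j}\bigl(u(s_k) - u(s_{v_j})\bigr)$ agrees with $\nu(s_{v_j})\frac{du_k}{ds}(s_{v_j})$, up to the appropriate orientation sign, with error $O(h)$. Summing over $k \in I_j$, the leading-order contributions reproduce exactly the continuous flux balance~\eqref{eq:flux_balance} at $v_j$, while the remainder is $O(h)$. The main obstacle is precisely this bifurcation step, because the discrete auxiliary unknown $u_{n_e+j}$ is not prescribed but determined implicitly by the coupled discrete flux balance; consistency there must therefore be interpreted as the statement that, once the exact nodal value $u(s_{v_j})$ is substituted for $u_{n_e+j}$, the discrete flux balance holds up to an $O(h)$ remainder. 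Verifying this requires carefully summing the per-edge $O(h)$ truncation errors and relies crucially both on the global continuity of $\nu$ (which lets one replace $\nu_{k,n_e+j}$ by $\nu(s_{v_j})$ up to $O(h)$) and on the $\mathcal{C}^1$ regularity of $\nu$ on each edge, needed to control the Taylor remainders uniformly.
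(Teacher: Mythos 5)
Your proposal is correct and follows essentially the same route as the paper: Taylor expansion of the exact solution about the node on each incident edge (using $u\in\mathcal{C}^2(e_k)$, which the paper likewise deduces from $\nu_k\in\mathcal{C}^1(e_k)$), continuity of $\nu$ to identify the coefficients, and the flux balance at the node to cancel the leading-order terms, giving a truncation error bounded by $Ch$. The only difference is one of bookkeeping: you phrase consistency at a bifurcation as an $O(h)$ residual of the discrete nodal balance after substituting the exact nodal value for the auxiliary unknown, whereas the paper aggregates the fluxes evaluated on the exact solution into a single quantity $F^\star_v$ and bounds $\tau_v = F^\star_v - F_v$, but the underlying computation is the same.
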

	
	\begin{proof}
	Define the numerical flux entering a node $v$ from the incoming edge $e_k$ by
	
	\begin{equation}
		\tilde{F}_{v,k}^+ = -\nu_{k,v}\dfrac{2}{|e_k|}\left(u_v - u_k\right),
		\label{eq:th:diff_cons:Fin}
	\end{equation}
	
	\noindent where $u_v$ and $u_k$ are the approximate solutions at the node $v$ and on $e_k$, respectively, and $\nu_{k,v}$ the diffusion constant evaluated at the node.\\
	Similarly, let
	
	\begin{equation*}
		\tilde{F}_{v,k}^- = -\nu_{k,v}\dfrac{2}{|e_k|}\left(u_k - u_v\right)
	\end{equation*}
	
	\noindent be the numerical flux going from the node $v$ into the outgoing edge $e_k$.\\
	Then, the total incoming and outgoing fluxes at the node $v$ are $\tilde{F}_v^+ = \sum_{ e_k\in\mathcal{E}_v^+} \tilde{F}_{v,k}^+$ and $\tilde{F}_v^- = \sum_{ e_k\in\mathcal{E}_v^-} \tilde{F}_{v,k}^-$.\\
	

	
	

	
	\noindent Let us now introduce the exact flux of the velocity on an edge $e_k$ evaluated at the node on $s_v\in\Lambda$:
	
	\begin{equation*}
		F_{v,k} = -\left(\nu \dfrac{du\big|_{e_k}}{ds}\right) (s_v)
	\end{equation*}
	
	\noindent and the evaluation of the numerical flux with respect to the exact solution $u$:

    \begin{equation}
        F^\star_v = \sum_{ e_k\in\mathcal{E}_v^+} F^{\star,+}_k = \sum_{ e_k\in\mathcal{E}_v^-} F^{\star,-}_k,
        \label{eq:th:Fstar_balance}
    \end{equation}
 
	
    \noindent where $F^{\star,+}_k$ and $F^{\star,-}_k$ are the evaluations of the numerical fluxes on $e_k$, incoming and outgoing edge, respectively, with respect to the exact solution:
	
	\begin{equation}
		F^{\star,+}_k = -\nu_{k,v}\dfrac{2}{|e_k|}\left(u\big|_{e_k}(s_v) - u\big|_{e_k}(s_k)\right),
		\label{eq:th:diff:Finstar}
	\end{equation}
	\begin{equation*}
		F^{\star,-}_k = -\nu_{k,v}\dfrac{2}{|e_k|}\left(u\big|_{e_k}(s_k) - u\big|_{e_k}(s_v)\right).
        \label{eq:th:diff:Foutstart}
	\end{equation*}
	
	\noindent We want to show that the truncation error $\tau_v \coloneq F^\star_v - F_v$ is controlled by the maximum length of the segments discretizing $\Lambda$, $h=\max_{k\in\mathcal{E}}|e_k|$, i.e.
	
	\begin{equation}
		\exists C\in\mathbb{R}_+^\star \text{\ such\ that\ } |\tau_v|\leq Ch.
		\label{eq:th:diff:th}
	\end{equation}
	
	\noindent We start by observing that, since $\nu\in\mathcal{C}^1(e_k),\ \forall e_k\in\mathcal{E}_v$, then $u\in\mathcal{C}^2(e_k)$ and, therefore, we can write the second order Taylor expansion of $u$ on each edge as $u(s_k) = u(s_v) + \dfrac{|e_k|}{2}\dfrac{du}{ds}(s_k) + R_k$, with $R_k \leq C_k |e_k|\leq C_k h,\ C_k > 0$. Then, substituting it in~\eqref{eq:th:diff:Finstar} and~\eqref{eq:th:diff:Foutstart}, $\forall k\in\{1,\dots,\mathcal{N}_e\}\ \exists C_k\in\mathbb{R}_+^\star$ such that
	
	\begin{equation}
		\begin{cases}
			F^{\star,+}_k = \omega_k F_v + R^+_k, \ &|R^+_k| \leq C_k h, \ \forall e_k\in\mathcal{E}_v^+,\\
			F^{\star,-}_j = -\omega_j F_v + R^-_j, \ &|R^-_j| \leq C_j h, \ \forall e_j\in\mathcal{E}_v^-,
		\end{cases}
		\label{eq:th:diff:Flinear}
	\end{equation}
	
	\noindent where $\omega_kF_v$ denotes the fraction of flux $F_v$ through the node $v$ coming from each edge $e_k$, and is given by: 

    \begin{equation*}
            \omega_k = \dfrac{\nu_{k,v}}{\nu(s_v)}, \ e_k\in\mathcal{E}_v.
    \end{equation*}
    
    \noindent As a consequence of~\eqref{eq:th:Fstar_balance} and~\eqref{eq:th:diff:Flinear}, we have that

    \begin{equation*}
        \sum_{e_k\in\mathcal{E}_k^+} \omega_k F_v + R^+ = \sum_{e_k\in\mathcal{E}_k^-} \omega_k F_v + R^-,
    \end{equation*}

    \noindent with $R^+ = \sum_{ e_k\in\mathcal{E}_v^+} R^+_k $ and $R^- = \sum_{ e_k\in\mathcal{E}_v^-} R^-_k$.

    \noindent Thus, we obtain

    \begin{equation*}
        F_v = \dfrac{R^- - R^+}{\sum_{k\in\mathcal{E}_v}\omega_k} = \dfrac{R^- - R^+}{\sum_{k\in\mathcal{E}_v}\dfrac{\nu_{k,v}}{\nu(s_v)}}.
    \end{equation*}

    \noindent Assuming that the diffusion coefficient $\nu$ is continuous, its evaluation on each edge $e_k$ at the common endpoint must be the same as at the node, i.e. $\nu_{k,v} = \nu(s_v), \ \forall e_k\in\mathcal{E}_v$. Then, denoting by $|\mathcal{E}_v|$ the number of segments intersecting at the node $v$, we have that $\sum_{e_k \in \mathcal{E}_v} \nu_{k,v} = |\mathcal{E}_v|\nu(s_v)$ and $\sum_{e_k\in\mathcal{E}_v} \omega_k = |\mathcal{E}_v|$. Consequently $F_v = \dfrac{R^- - R^+}{|\mathcal{E}_v|} $,
	and the truncation error is given by

    \begin{equation*}
        |\tau_v| = \left| F^\star_v - F_v \right| = \left| \left(|\mathcal{E}_v^+| -1 \right) F_v + R^+ \right| = \left| \left(|\mathcal{E}_v^+| -1 \right) \dfrac{R^- - R^+}{|\mathcal{E}_v|} + R^+ \right|.
    \end{equation*}
 

    \noindent By triangular inequality and recalling that every internal node has at least one incoming and one outgoing edge, i.e. $|\mathcal{E}_v|\geq |\mathcal{E}_v^+|\geq 1$,

    \begin{equation*}
        |\tau_v| \leq \left(\dfrac{|\mathcal{E}_v^+|}{|\mathcal{E}_v|} + \dfrac{1}{|\mathcal{E}_v|} \right) \left(|R^-| + |R^+|\right) + |R^+| \leq 2 \left(|R^-| + |R^+|\right) + |R^+| \leq (2C^- + 3C^+)h = Ch,
    \end{equation*}
 
	
	
	
	
	
	
	\noindent thus proving that \eqref{eq:th:diff:th} holds.
	
	\end{proof}

	\subsection{Drift-diffusion equation}
	\label{section:NA:DiffusionTransport}
	In Section~\ref{section:NM:DiffusionTransport} we have introduced a numerical scheme for the advection-diffusion equation~\eqref{eq:diffusion_transport} and obtained a linear system of equation $M\mathbf{u}=\mathbf{f}$. The matrix $M$ is given by a sum of block matrices resulting from the discretizations of the pure transport and diffusion problems, analyzed above. Then, based on the results presented for the two separate problems, we can prove the same results for this problem, as well. In particular, since $M$ is given by the sum of the two Z-matrices $M_\text{tr}$ and $M_\text{diff}$,
	
	\begin{property}
		The matrix $M$ defined in equation~\eqref{eq:diffusion_transport:matrix:M} is a Z-matrix.
		\label{property:M:Zmatrix}
	\end{property}
	
	\noindent Thanks to the construction of $M$, its resulting structure is very similar to those of $M_\text{tr}$ and $A_\text{diff}$, and thus we can prove the desired properties similarly to what we have done for the two separate problems.\\
    Since the set of non-zero elements in $M_\text{tr}$ is a subset of the non-zero elements of the top-left block $B$ of $A_\text{diff}$, the sum of these two matrices still satisfies the SC property. Moreover, transposing them does not affect their pattern, and as a consequence also $M^T$ satisfies the SC property.

    \begin{property}
        The matrix $M$ defined in equation~\eqref{eq:diffusion_transport:matrix:M} and its transpose $M^T$ satisfy the \textbf{SC property} (Definition 6.2.7 of \cite{horn2012matrix}).
        \label{property:M:sc}
    \end{property}

    \noindent Moreover, as a consequence of Properties~\ref{property:A:colsum},~\ref{property:B:row_sum} and~\ref{property:B:symmetric}, we know that $M$ has non-negative column sums:

    \begin{property}
        The row sums of the elements of the matrix $M$ defined in equation~\eqref{eq:diffusion_transport:matrix:M} are always zero but fot a number $n=|\mathcal{N}_s|>0$ of rows.
        \label{property:M:colsum}
    \end{property}

    \noindent As a consequence of Properties~\ref{property:B:SC},~\ref{property:M:sc} and~\ref{property:M:colsum}, we can show, as in the proof of Theorem~\ref{theorem:diffusion}, that the transpose $M^T$ of the matrix $M$ is inverse-positive and, therefore, also $M$ is. Then, the following result holds:
 
	\begin{theorem}
		The matrix $M$ defined in equation~\eqref{eq:diffusion_transport:matrix:M} is non-singular and the associated discrete drift-diffusion scheme is positive.
	\end{theorem}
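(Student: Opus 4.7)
The plan is to mirror the argument of Theorem~\ref{theorem:diffusion}, leveraging the three structural ingredients already in place: $M$ is a Z-matrix (Property~\ref{property:M:Zmatrix}), both $M$ and $M^T$ satisfy the SC property (Property~\ref{property:M:sc}), and the row/column sums of $M$ are non-negative with strict positivity on exactly the $|\mathcal{N}_s|>0$ rows/columns attached to Dirichlet nodes (Property~\ref{property:M:colsum}, together with the discussion preceding it). The goal is to transfer these into the non-singular M-matrix characterization of Theorem~1 of~\cite{PLEMMONS1977175}, which gives both invertibility and inverse-positivity in one step.

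First I would work with $M^T$ rather than $M$ directly, as was done in Theorem~\ref{theorem:transport}. Since $M^T$ is a Z-matrix whose row sums coincide with the column sums of $M$, the sign pattern of its off-diagonal entries together with weak non-negativity of the row sums yields $M^T_{ii}\ge\sum_{j\neq i}|M^T_{ij}|$ for every $i$, with strict inequality on the $|\mathcal{N}_s|$ rows corresponding to Dirichlet-endpoint edges. Combined with the SC property of $M^T$, Better Corollary~6.2.9 of~\cite{horn2012matrix} then gives non-singularity of $M^T$. Exactly the same hypotheses hold for $M^T+\tilde D$ for any diagonal matrix $\tilde D$ with strictly positive diagonal entries, since a positive diagonal shift preserves the Z-pattern, preserves SC, and can only strengthen the diagonal dominance. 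Theorem~1 of~\cite{PLEMMONS1977175} then upgrades this to the statement that $M^T$ is a non-singular M-matrix, and in particular inverse-positive.

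To transfer the conclusion to $M$ itself I would repeat the transposition trick used in Theorem~\ref{theorem:transport}: starting from $M^T(M^T)^{-1}=I$ and transposing yields $\bigl((M^T)^{-1}\bigr)^T M = I$, so $M$ is invertible with $M^{-1}=\bigl((M^T)^{-1}\bigr)^T$, whose entries are the transposed entries of a non-negative matrix and hence themselves non-negative. Positivity of the scheme then follows immediately: for every right-hand side $\mathbf{f}\ge 0$, each component of $\mathbf{u}=M^{-1}\mathbf{f}$ is a non-negative linear combination of non-negative quantities, hence non-negative.

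The only delicate bookkeeping I foresee is verifying that Property~\ref{property:M:colsum} really applies in the form needed, namely that the diagonal-dominance inequalities for $M^T$ are strict on at least one row and weak everywhere else. This requires checking that, when the blocks $I+M_\text{tr}$ and $\Delta t\,A_\text{diff}$ are added, the strict positivity on rows attached to Dirichlet edges (inherited from the $\nu_{k,1}T_{k,1}$ contribution in $B_{kk}$, see~\eqref{diffusion:B}) survives, while the rows associated with bifurcation nodes continue to have zero row sum because $M_\text{tr}$ contributes nothing to those rows and the diffusion block is already balanced there by construction. Once this accounting is made explicit, the argument runs exactly in parallel to the two earlier proofs and requires no new ideas.
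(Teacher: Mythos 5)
Your proposal follows essentially the same route as the paper: the paper also argues via Properties~\ref{property:M:Zmatrix},~\ref{property:M:sc} and~\ref{property:M:colsum} that $M^T$ is diagonally dominant with strict dominance on the Dirichlet rows, invokes the SC property and the M-matrix characterization of~\cite{PLEMMONS1977175} as in Theorem~\ref{theorem:diffusion}, and then transfers inverse-positivity from $M^T$ to $M$ exactly as in Theorem~\ref{theorem:transport}. Your added bookkeeping on how the strict row-sum positivity survives the sum of $I+M_\text{tr}$ and $\Delta t\,A_\text{diff}$ is correct and simply makes explicit what the paper leaves implicit.
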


		\noindent Finally, we can apply the same procedure adopted for the diffusion equation in the previous section, explicitly write the system \eqref{eq:diffusion_transport:matrix:M} for the unknown $\mathbf{\hat{u}}$ as follows:
		
		\begin{equation*}
		\begin{cases}
			(M/D_t) \mathbf{\hat{u}} = \mathbf{f}, \\
			\mathbf{u_B} = -D^{-1}C^T \mathbf{\hat{u}},
		\end{cases}
		\end{equation*}

		\noindent where we have denoted by $D_t$ the bottom-right block of the drift-diffusion matrix: $D_t = \Delta t D$ and by $M/D_t$ its Schur complement. Following the same steps as in the proof of Theorem~\ref{theorem:diffusion:shur}, we can prove the following result:
	
	\begin{theorem}
		Let $M$ be the $(n_e+ n_\text{diff})\times(n_e+ n_\text{diff})$ matrix defined by \eqref{eq:diffusion_transport:matrix:M} and $M/D_t$ the Schur complement of its south-east diagonal block $D_t$. Then, $M/D_t$ is invertible and the system \eqref{eq:diffusion_transport:matrix:M} is positive.
	\end{theorem}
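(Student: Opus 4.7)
The plan is to follow the overall structure of the proof of Theorem~\ref{theorem:diffusion:shur}, but since $M$ is not Hermitian (the transport block $M_\text{tr}$ is not symmetric), the inertia-based step used there is unavailable, so I would instead exploit the inverse-positivity of the full matrix $M$ already established in the preceding theorem.

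First, I would write $M$ in block form as
\begin{equation*}
M=\begin{bmatrix}\tilde{B} & C_t\\ C_t^T & D_t\end{bmatrix},\qquad \tilde{B}=I+M_\text{tr}+\Delta t\,B,\quad C_t=\Delta t\,C,\quad D_t=\Delta t\,D,
\end{equation*}
so that $M/D_t=\tilde{B}-C_t D_t^{-1}C_t^T = I+M_\text{tr}+\Delta t\,(A_\text{diff}/D)$. Since $D_t$ is diagonal with strictly positive entries, it is invertible, and since $M$ is non-singular by the preceding theorem, Schur's determinant identity $\det(M)=\det(D_t)\det(M/D_t)$ gives the invertibility of $M/D_t$ immediately.

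To prove positivity of the associated system, I would invoke the Banachiewicz block inversion formula: when both $D_t$ and $M/D_t$ are non-singular, the north-west $n_e\times n_e$ block of $M^{-1}$ coincides with $(M/D_t)^{-1}$. The preceding theorem shows that $M$ is inverse-positive, meaning every entry of $M^{-1}$ is non-negative; hence its leading block is non-negative, which yields $(M/D_t)^{-1}\geq 0$. Therefore, for any $\mathbf{f}\geq\mathbf{0}$, the solution $\mathbf{\hat u}=(M/D_t)^{-1}\mathbf{f}$ is component-wise non-negative, which is the positivity claim; as a byproduct, $\mathbf{u}_B=-D^{-1}C^T\mathbf{\hat u}\geq \mathbf{0}$ as well, because by~\eqref{diffusion:B} the entries of $C$ are non-positive and those of $D$ are strictly positive.

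The main obstacle, as noted above, is the loss of symmetry that prevents a direct Schur-complement-inertia argument à la Theorem~\ref{theorem:diffusion:shur}; the block-inversion detour bypasses this cleanly, provided the full matrix $M$ is already known to be inverse-positive. An alternative route staying closer to the letter of Theorem~\ref{theorem:diffusion:shur} would verify directly that $M/D_t$ is a Z-matrix with strictly positive diagonal, as it is the sum $I+M_\text{tr}+\Delta t\,(A_\text{diff}/D)$ of Z-matrices with compatible sign patterns, then check that $M/D_t+\tilde D$ remains non-singular for every positive diagonal perturbation $\tilde D$ by inheriting the SC property and row-sum structure from the two summands, and finally conclude via Theorem~1 of~\cite{PLEMMONS1977175} that $M/D_t$ is a non-singular M-matrix. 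I would favour the block-inversion approach, which is essentially one line once the preceding theorem is in hand.
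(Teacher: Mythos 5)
Your proposal is correct, but it takes a genuinely different route from the paper. The paper gives no standalone proof: it simply states that the result follows ``following the same steps as in the proof of Theorem~\ref{theorem:diffusion:shur}'', i.e.\ by re-running the Schur-complement argument used for the pure diffusion matrix (Z-matrix structure of $M/D_t$, positive diagonal, and the equivalences in Theorem~1 of~\cite{PLEMMONS1977175}); your closing ``alternative route'' is essentially that path, and you correctly flag the one genuine obstruction the paper glosses over, namely that $M$ is not symmetric, so the inertia argument (Theorem~1.6 of~\cite{zhang2006schur}) used to show that the Schur complement has positive eigenvalues is not directly available and must be replaced by, e.g., the Z-matrix/M-matrix characterization alone. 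Your preferred argument instead sidesteps the sign-pattern verification entirely: invertibility of $M/D_t$ follows from Schur's determinant identity $\det(M)=\det(D_t)\det(M/D_t)$, and positivity follows from the Banachiewicz block-inversion formula, since the north-west block of $M^{-1}$ equals $(M/D_t)^{-1}$ and the preceding theorem already gives $M^{-1}\geq 0$ entrywise; the bound $\mathbf{u}_B=-D^{-1}C^T\mathbf{\hat u}\geq\mathbf{0}$ is also handled correctly from the signs of $C$ and $D$. What your approach buys is brevity and robustness (it needs no re-examination of the entries of $\tilde{B}-C_tD_t^{-1}C_t^T$), at the cost of depending entirely on the inverse-positivity of the full matrix $M$ established beforehand; the paper's route, by contrast, yields the M-matrix property of $M/D_t$ itself, which is slightly stronger information about the reduced system, and your identity $M/D_t=I+M_\text{tr}+\Delta t\,(A_\text{diff}/D)$ is a nice observation that would make that route concrete as well.
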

	
	\section{Results}
	\label{section:Results}
	In this section we show some applications of the proposed methods to different geometries. 
    We first check that the approximation errors on one-dimensional straight lines respect the well-known convergence results of Finite Volumes, then we study the convergence on domains with bifurcations and finally apply the methods to a drift-diffusion problem on the complex geometry of an electrical tree.
	
	The error is computed as the $L^1$ norm of the difference between the approximated and the exact solutions at the final time $T$, normalized with respect to the $L^1$ norm of the exact solution:
	
	\begin{equation*}
		\text{error} = \dfrac{\|u_\text{ex}(\cdot,T) - \tilde{u}^{N_t}\|_{L^1([0,1])}}{\|u_\text{ex}(\cdot,T)\|_{L^1([0,1])}},
	\end{equation*}

	\noindent where we denote by $\tilde{u}^l$ the approximate solution at each discrete time $t_l$ as a piecewise constant function, taking values $\tilde{u}_k^l$ on $e_k, \ k=1,\dots,n_e$, $l=1,\dots,N_t$.
	
	\subsection{TC1: Transport equation on a straight line}
	\label{section:R:ConvergenceTransport}
	The discretization presented in Section~\ref{section:NM:Transport} of the transport equation on a line is nothing but the Finite Volume upwind scheme, see~\cite{leveque1992numerical}.
	

	We apply it to solve the homogeneous transport equation~\eqref{eq:transport} with null source term $f=0$ and advection speed $c=0.5$, on the space-time domain $[0,1]\times[0,1]$, with initial condition $u_0 (x) = 0,\ \forall x\in[0,1]$ and inflow boundary condition $u(0,t) = 1,\ \forall t\in(0,1]$.\\
	The exact solution of this problem consists in the propagation of a rectangular wave of speed $c$:
	
	\begin{equation*}
		u_\text{ex}(x,t) =
		\begin{cases}
			1, & x\in[0,ct], \ t\in(0,1], \\
			0, & x\in(ct,1], \ t\in(0,1].
		\end{cases}
		\tag{TC1A}
        \label{eq:tc1a}
	\end{equation*}
	
	\noindent In the plots in Figure~\ref{plot:line:transport} we can observe that the convergence rate in space and time of $\frac{1}{2}$, which is the expected value for drift equations with discontinuous solution, is recovered by our method.
	
	Convergence of order 1 is expected when the solution is continuous, and we can appreciate it in Figure~\ref{plot:line:transport:cont}, where we have applied the proposed method to an homogeneous transport equation with advection speed $c=0.5$ with initial condition
	
	\begin{equation*}
		u_0(x) = \sin(\pi x), \quad x\in[0,1],
	\end{equation*}
	
	\noindent whose exact solution is
	
	\begin{equation*}
		u_\text{ex}(x,t) = u_0(x-ct), \quad \forall x\in[0,1],\ t \in [0,1].
		\tag{TC1B}
        \label{eq:tc1b}
	\end{equation*}
	
	\noindent We have imposed a Dirichlet condition on the inflow boundary accordingly: $u(0,t) = u_0(-ct), \ \forall t\in[0,1]$.
	
	In both cases we observe that the error in space saturates for small values of $h$ if the time grid is not refined enough and viceversa. This is due to the predominance of the error made on the coarsest grid. In the discontinuous~\eqref{eq:tc1a} case, (Figure~\ref{plot:line:transport}) the two plots have almost the same behavior and both errors saturate for comparable mesh spacings, while in the continuous case (Figure~\ref{plot:line:transport:cont}) the error in space needs a finer time discretization to avoid saturation, indicating a predominance of the error committed by the discretization in space over that in time in problem~\eqref{eq:tc1b}. This is due to the diffusivity of the implicit time scheme, that suffers in presence of discontinuities.

	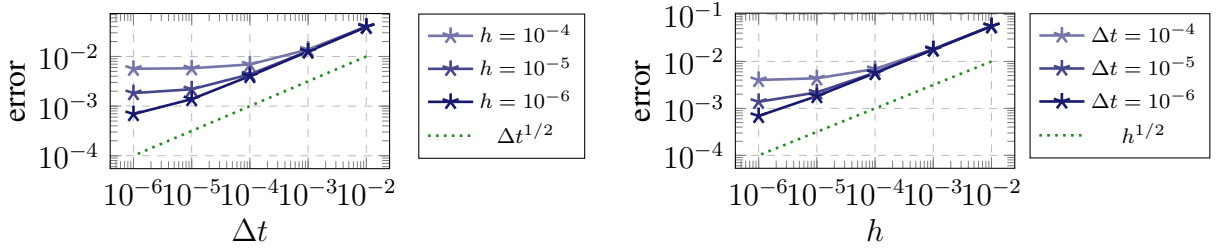
\begin{figure}
		\centering
		\subfloat[\centering Error with respect to time discretization.]
		{\begin{tikzpicture}
		\begin{loglogaxis}
			[scale = .5,
			width = .45\textwidth,
			height = .25\textwidth,
			xlabel={$\Delta t$},
			ylabel={error},
			scale only axis,
			xtick={},
			ytick={},
			ymajorgrids=true,
			xmajorgrids=true,
			grid style=dashed,
			legend style = {draw=black, font=\scriptsize, at={(1.7,1)}},
			legend entries ={
				 $h =10^{-4}$, $h =10^{-5}$, $h=10^{-6}$, 
				 $\Delta t^{1/2}$},
			]
			\addplot[color=darkBlue!60, mark=star, mark size =3.2, line width = 1]
			coordinates
			{(1e-2,0.0402572)
			(1e-3,0.0138184)
			(1e-4,0.00690975)
			(1e-5,0.00578111)
			(1e-6,0.00565589)};
		\addplot[color=darkBlue!80, mark=star, mark size =3.2, line width = 1]
			coordinates
			{(1e-2,0.039857)
			(1e-3,0.0127397)
			(1e-4,0.00437014)
			(1e-5,0.0021851)
			(1e-6,0.00182819)};
		\addplot[color=darkBlue, mark=star, mark size =3.2, line width = 1]
			coordinates
			{(1e-2,0.0394009)
			(1e-3,0.0126067)
			(1e-4,0.00402888)
			(1e-5,0.00138197)
			(1e-6,0.000690989)};
		\addplot[color=lightGreen, dotted, line width = 1]
			coordinates
			{(1e-2,1e-2)
				(1e-6,1e-4)};
		\end{loglogaxis}
		\end{tikzpicture}
		\quad}	
		\subfloat[\centering Error with respect to space discretization.]
		{\begin{tikzpicture}
			\begin{loglogaxis}
				[scale = .5,
				width = .45\textwidth,
				height = .25\textwidth,
				xlabel={$h$},
				ylabel={error},
				scale only axis,
				xtick={},
				ytick={},
				ymajorgrids=true,
				xmajorgrids=true,
				grid style=dashed,
				legend style = {draw=black, font=\scriptsize, at={(1.7,1)}},
				legend entries ={
					$\Delta t = 10^{-4}$, $\Delta t = 10^{-5}$, $\Delta t = 10^{-6}$, 
					$h^{1/2}$},
				]
				\addplot[color=darkBlue!60, mark=star, mark size =3.2, line width = 1]
				coordinates
					{(1e-2,0.0564656)
					(1e-3,0.0182788)
					(1e-4,0.00690975)
					(1e-5,0.00437014)
					(1e-6,0.00402888)};
				\addplot[color=darkBlue!80, mark=star, mark size =3.2, line width = 1]
				coordinates
					{(1e-2,0.0563389)
					(1e-3,0.0182788)
					(1e-4,0.00578111)
					(1e-5,0.0021851)
					(1e-6,0.00138197)};
				\addplot[color=darkBlue, mark=star, mark size =3.2, line width = 1]
				coordinates
					{(1e-2,0.0563264)
					(1e-3,0.0178427)
					(1e-4,0.00565589)
					(1e-5,0.00182819)
					(1e-6,0.000690989)};
				\addplot[color=lightGreen, dotted, line width = 1]
				coordinates
				{(1e-2,1e-2)
					(1e-6,1e-4)};
			\end{loglogaxis}
		\end{tikzpicture}
		}	
		\caption{\textbf{TC1A -} Convergence test for the transport equation on a line with discontinuous exact solution. Normalized error in norm $L^1([0,1])$ computed at time $t=1$ corresponding to space and time meshes with different dimensions.}
		\label{plot:line:transport}
	\end{figure}

	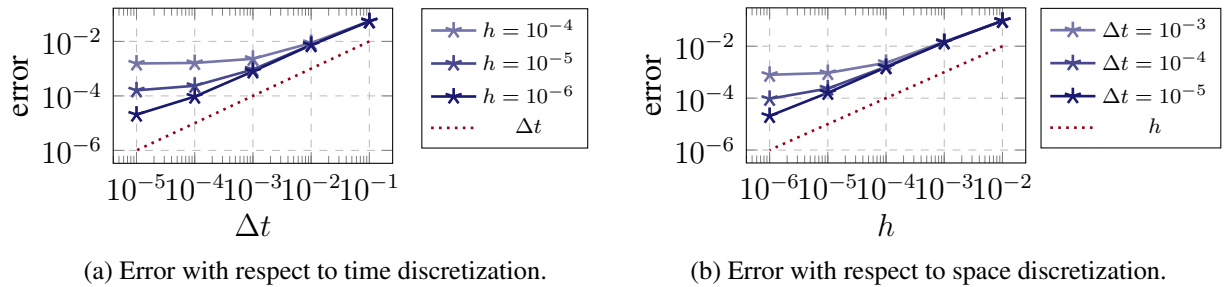
\begin{figure}
		\centering
		\subfloat[\centering Error with respect to time discretization.]
		{\begin{tikzpicture}
			\begin{loglogaxis}
				[scale = .5,
				width = .45\textwidth,
				height = .25\textwidth,
				xlabel={$\Delta t$},
				ylabel={error},
				scale only axis,
				xtick={},
				ytick={},
				ymajorgrids=true,
				xmajorgrids=true,
				grid style=dashed,
				legend style = {draw=black, font=\scriptsize, at={(1.7,1)}},
				legend entries ={
					 $h =10^{-4}$, $h =10^{-5}$, $h =10^{-6}$, $\Delta t$},
				]
				\addplot[color=darkBlue!60, mark=star, mark size = 3.2,
				line width = 1]
				coordinates
				{
					(1e-1, 0.0554731)
					(1e-2, 0.00874119)
					(1e-3, 0.00229917)
					(1e-4, 0.00161976)
					(1e-5, 0.00154793)
				};			
				\addplot[color=darkBlue!80, mark=star, mark size = 3.2,
				line width = 1]
				coordinates
				{
					(1e-1, 0.0546917)
					(1e-2, 0.00749042)
					(1e-3, 0.0009313)
					(1e-4, 0.000234759)
					(1e-5, 0.000160177)
				};	
				\addplot[color=darkBlue, mark=star, mark size = 3.2,
				line width = 1]
				coordinates
				{
					(1e-1,0.0546133)
					(1e-2,0.00736436)
					(1e-3,0.000792741)
					(1e-4,9.40881e-05)
					(1e-5,2.05956e-05)
};
				\addplot[color=darkRed, dotted, line width = 1]
				coordinates
				{(1e-1,1e-2)
					(1e-5,1e-6)};
			\end{loglogaxis}
		\end{tikzpicture}
		\quad}
		\subfloat[\centering Error with respect to space discretization.]
{		\begin{tikzpicture}
			\begin{loglogaxis}
				[scale = .5,
				width = .45\textwidth,
				height = .25\textwidth,
				xlabel={$h$},
				ylabel={error},
				scale only axis,
				xtick={},
				ytick={},
				ymajorgrids=true,
				xmajorgrids=true,
				grid style=dashed,
				legend style = {draw=black, font=\scriptsize, at={(1.7,1)}},
				legend entries ={
					$\Delta t = 10^{-3}$, $\Delta t = 10^{-4}$, $\Delta t = 10^{-5}$, 
					$h$},
				]
				\addplot[color=darkBlue!60, mark=star, mark size = 3.2,
				line width = 1]
				coordinates
				{(1e-2,0.0955992)
				(1e-3,0.0148219)
				(1e-4,0.00229917)
				(1e-5,0.0009313)
				(1e-6,0.000792741)
				};
				\addplot[color=darkBlue!80, mark=star, mark size =3.2, line width = 1]
				coordinates
				{(1e-2,0.0953186)
				(1e-3,0.0142374)
				(1e-4,0.00161976)
				(1e-5,0.000234759)
				(1e-6,9.40881e-05)
				};
				\addplot[color=darkBlue, mark=star, mark size =3.2, line width = 1]
				coordinates
				{(1e-2,0.0952913)
				(1e-3,0.0141771)
				(1e-4,0.00154793)
				(1e-5,0.000160177)
				(1e-6,2.05956e-05)
				};
				\addplot[color=darkRed, dotted, line width = 1]
				coordinates
				{(1e-2,1e-2)
					(1e-6,1e-6)};
			\end{loglogaxis}
		\end{tikzpicture}
		}
	\caption{\textbf{TC1B -} Convergence test for  the transport equation on a line with continuous exact solution. Normalized error in norm $L^1([0,1])$ computed at time $t=1$ corresponding to space and time meshes with different dimensions.}
	\label{plot:line:transport:cont}
	\end{figure}
	
	\subsection{TC2: Diffusion equation on a straight line}
	\label{section:R:ConvergenceDiffusion}
    The method is applied to the solution of equation~\eqref{eq:diffusion_transport} with $c=0$, constant diffusion coefficient $\nu=2$ and initial condition $u_0(x) = \sin(\pi x), \ x\in [0,1]$. We want to approximate the exact solution
	
	\begin{equation*}
		u_\text{ex}(x,t) = \sin(\pi x) e^{-2\pi^2 t}, \quad x \in [0,1], t \in [0,1],
		\tag{TC2}
        \label{eq:tc2}
	\end{equation*}

	\noindent and impose Dirichlet boundary conditions at the endpoints accordingly: $u_\text{ex}(0,t) = u_\text{ex}(1,t) = 0, \ \forall t\in(0,1]$.
	
	In Figure~\ref{plot:line:diffusion} we can observe that the theoretical orders of convergence for the Implicit Euler and FV-TPFA, 1 and 2 respectively, are verified. The error saturates for very small sizes of the spatial mesh, due to the predominance of the error induced by the time discretization.
	
		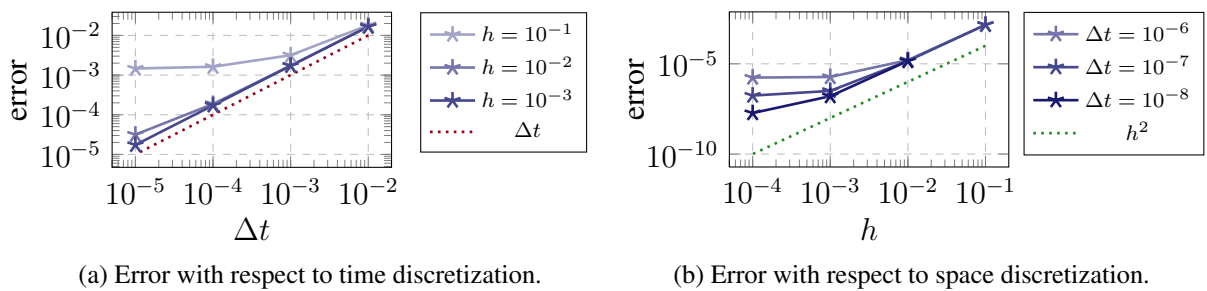
\begin{figure}
		\centering
		\subfloat[\centering Error with respect to time discretization.]
		{\begin{tikzpicture}
			\begin{loglogaxis}
				[scale = .5,
				width = .45\textwidth,
				height = .25\textwidth,
				xlabel={$\Delta t$},
				ylabel={error},
				scale only axis,
				xtick={},
				ytick={},
				ymajorgrids=true,
				xmajorgrids=true,
				grid style=dashed,
				legend style = {draw=black, font=\scriptsize, at={(1.7,1)}},
				legend entries ={$h = 10^{-1}$, $h =10^{-2}$, $h = 10^{-3}$,
					$\Delta t$},
				]
				\addplot[color=darkBlue!40, mark=star, mark size = 3.2,
				line width = 1]
				coordinates
				{(1e-2,0.0181507)
					(1e-3,0.00317205)
					(1e-4,0.00162147)
					(1e-5,0.00146587)
				};
				\addplot[color=darkBlue!60, mark=star, mark size = 3.2,
				line width = 1]
				coordinates
				{(1e-2,0.0166604)
					(1e-3,0.00173138)
					(1e-4,0.000186591)
					(1e-5,3.15869e-05)
				};
				\addplot[color=darkBlue!80, mark=star, mark size =3.2, line width = 1]
				coordinates
				{(1e-2,0.0166457)
					(1e-3,0.0017171)
					(1e-4,0.000172369)
					(1e-5,1.73715e-05)
				};
				\addplot[color=darkRed, dotted, line width = 1]
				coordinates
				{(1e-2,1e-2)
					(1e-5,1e-5)};
			\end{loglogaxis}
		\end{tikzpicture}\quad}
%
		\subfloat[\centering Error with respect to space discretization.]
		{\begin{tikzpicture}
			\begin{loglogaxis}
				[scale = .5,
				width = .45\textwidth,
				height = .25\textwidth,
				xlabel={$h$},
				ylabel={error},
				scale only axis,
				xtick={},
				ytick={},
				ymajorgrids=true,
				xmajorgrids=true,
				grid style=dashed,
				legend style = {draw=black, font=\scriptsize, at={(1.7,1)}},
				legend entries ={
					$\Delta t = 10^{-6}$, $\Delta t = 10^{-7}$, $\Delta t = 10^{-8}$, 
					$h^2$},
				]
				\addplot[color=darkBlue!60, mark=star, mark size =3.2, line width = 1]
				coordinates
				{(1e-1,0.00145031)
					(1e-2,1.60812e-05)
					(1e-3,1.8661e-06)
					(1e-4,1.72427e-06)
};
				\addplot[color=darkBlue!80, mark=star, mark size =3.2, line width = 1]
				coordinates
				{(1e-1,0.00144875)
					(1e-2,1.45306e-05)
					(1e-3,3.15836e-07)
					(1e-4,1.7369e-07)
};
				\addplot[color=darkBlue, mark=star, mark size =3.2, line width = 1]
				coordinates{
					(1e-2,1.43756e-05)
					(1e-3,1.60701e-07)
					(1e-4,1.86475e-08)
};
				\addplot[color=lightGreen, dotted, line width = 1]
				coordinates
				{(1e-1,1e-4)
					(1e-4,1e-10)};
			\end{loglogaxis}
		\end{tikzpicture}}
	\caption{\textbf{TC2 -} Convergence test for the diffusion equation on a line. Normalized $L^1$ error computed at time $t=1$ corresponding to space and time meshes with different dimensions.}
	\label{plot:line:diffusion}
	\end{figure}

	
	\subsection{TC3: Transport equation on a bifurcation}
	\label{section:R:Convergence:TransportBif}
    The first branched domain we consider is represented in Figure~\ref{figure:domain:TC3}, made of one bifurcation node and three edges of equal length $|e_k| = L = 2,\ k=1,2,3$, each further partitioned into segments of length $h$ for the space discretization. We set the advection speed $c$ directed as the respective edges and constant on each of them:
	
	\begin{equation*}
		c_k = c\big|_{e_k} =
		\begin{cases}
			10, & k=2, \\
			5, & k=1,3.
		\end{cases}
	\end{equation*}

	\noindent If we set as initial condition $u_0 = 0$ on the whole domain $\Lambda$ and inflow boundary condition $u_B=1$ on the vertex $v_2$, we are able to compute the exact solution at each time step using the method of characteristics. On $e_2$ we have the propagation of a rectangular wave with amplitude $1$ and speed $c_2$, that reaches the bifurcation point for $t = t^* = \dfrac{|e_2|}{c_2} = \dfrac{2}{10} = 0.2$:
	
	\begin{equation*}
		u_\text{ex}(x,t) =
		\begin{cases}
			1, & \text{if}\ x\leq c_2 t \ \text{and} \ 0\leq t \leq t^*, \ \text{or}\ t>t^*, \\
			0, & \text{if}\ x>c_2 t, \ 0\leq t \leq t^*.
		\end{cases}
	\end{equation*}

	\noindent Let $u^*$ be the inflow condition on the edges $e_1$ and $e_3$ through the node $I$. Then, we obtain the propagation of a rectangular wave of amplitude $u^* = \dfrac{c_2}{c_1+c_3}u_2$ and speed $c_k$ on each $e_k, \ k=1,3$, starting at time $t=t^*$. Finally, the exact solution on $e_k, \ k=1,3$, is the following:
	
	\begin{equation*}
		u_{\text{ex}}(x,t) =
		\begin{cases}
			1, & \text{if}\ x\leq c_k t, \ t>t^*, \\
			0, & \text{otherwise}.
		\end{cases}
	\end{equation*}

    \noindent Thus, we expect a piecewise constant solution on the edges of the extended graph. However, in Figure~\ref{figure:branch:transport}, representing the numerical solution at three different times for $h=10^{-2}$, we can observe that the jump discontinuities are not exactly captured. For instance, the solution at time $t=0.2$ should be $u=2$ on $e_2$ and $u=0$ on $e_1$ and $e_3$, but a little dissipation is shown in proximity of the bifurcation, due to the diffusivity of the upwind scheme.\\
    Since on each branch the numerical method is a Finite Volume scheme with upwind flux, we expect to observe similar convergence properties as in the analogous test case~\eqref{eq:tc1b} presented in Section~\ref{section:R:ConvergenceTransport} with discontinuous solution. Indeed, Figure~\ref{plot:branch:transport} shows convergence with order $\frac{1}{2}$ both in time and space of the normalized $L^1$ error.

		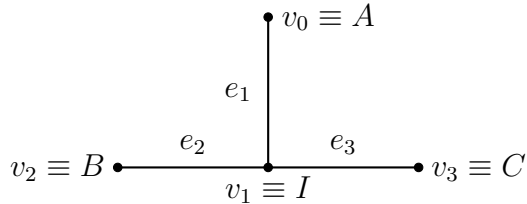
\begin{figure}[h]
		\centering
		\begin{tikzpicture}[scale=.8]
			\filldraw (0,0) circle (2pt) node [yshift=-.3cm] {$v_1\equiv I$};
			\filldraw (0,2.5) circle (2pt) node [xshift = .8cm] {$v_0\equiv A$};
			\filldraw (-2.5,0) circle (2pt) node[xshift = -.8cm] {$v_2\equiv B$};
			\filldraw (2.5,0) circle (2pt) node [xshift = .8cm] {$v_3\equiv C$};
			\draw [thick] (0,0) -- (-2.5,0) node [xshift = 1cm, yshift = .3cm] {$e_2$};
			\draw [thick] (0,0) -- (2.5,0) node [xshift = -1cm, yshift = .3cm] {$e_3$};
			\draw [thick] (0,2.5) -- (0,0) node [xshift = -.4cm, yshift = 1cm] {$e_1$};
		\end{tikzpicture}
		\caption{Simple one-dimensional domain with three edges and one bifurcation node $v_1$. The set of edges is $\mathcal{E}=\{e_1=IA,e_2=BI,e_3=IC\}$. The only source node is $B$ and the set of end nodes is $\mathcal{N}_e=\{A,C\}$. There is one bifurcation node $I$.}
		\label{figure:domain:TC3}
	\end{figure}
	
	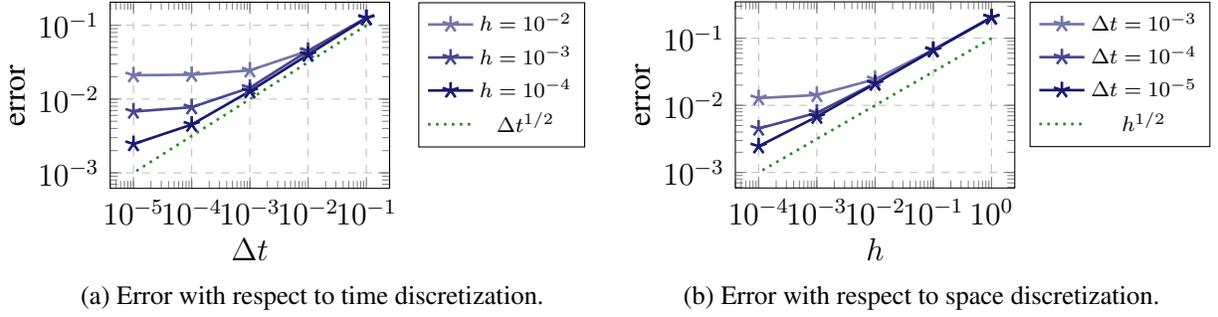
\begin{figure}
		\centering
		\subfloat[\centering Error with respect to time discretization.]
		{\begin{tikzpicture}
			\begin{loglogaxis}
				[scale = .5,
				width = .45\textwidth,
				height = .3\textwidth,
				xlabel={$\Delta t$},
				ylabel={error},
				scale only axis,
				xtick={},
				ytick={},
				ymajorgrids=true,
				xmajorgrids=true,
				grid style=dashed,
				legend style = {draw=black, font=\scriptsize, at={(1.7,1)}},
				legend entries ={
					$h = 10^{-2}$, $h =10^{-3}$, $h =10^{-4}$, 
					$\Delta t^{1/2}$},
				]
				\addplot[color=darkBlue!60, mark=star, mark size =3.2, line width = 1]
				coordinates
				{(1e-1,0.126829)
					(1e-2,0.0450554)
					(1e-3,0.0245129)
					(1e-4,0.0213946)
					(1e-5,0.021063)};
				\addplot[color=darkBlue!80, mark=star, mark size =3.2, line width = 1]
				coordinates
				{(1e-1,0.12522)
					(1e-2,0.0450554)
					(1e-3,0.0142596)
					(1e-4,0.00775385)
					(1e-5,0.00677242)};
				\addplot[color=darkBlue!, mark=star, mark size =3.2, line width = 1]
				coordinates
				{(1e-1,0.125067)
					(1e-2,0.0400273)
					(1e-3,0.0127944)
					(1e-4,0.0045098)
					(1e-5,0.00245704)};
				\addplot[color=lightGreen, dotted, line width = 1]
				coordinates
				{(1e-1,1e-1)
					(1e-5,1e-3)};
			\end{loglogaxis}
		\end{tikzpicture}
		\quad}
		\subfloat[\centering Error with respect to space discretization.]
		{\begin{tikzpicture}
			\begin{loglogaxis}
				[scale = .5,
				width = .45\textwidth,
				height = .3\textwidth,
				xlabel={$h$},
				ylabel={error},
				scale only axis,
				xtick={},
				ytick={},
				ymajorgrids=true,
				xmajorgrids=true,
				grid style=dashed,
				legend style = {draw=black, font=\scriptsize, at={(1.7,1)}},
				legend entries ={
					$\Delta t = 10^{-3}$, $\Delta t = 10^{-4}$, $\Delta t = 10^{-5}$,
					$h^{1/2}$},
				]
				\addplot[color=darkBlue!60, mark=star, mark size =3.2, line width = 1]
				coordinates
				{(1e0,0.204478)
					(1e-1,0.0674696)
					(1e-2,0.0245129)
					(1e-3,0.0142596)
					(1e-4,0.0127944)};
				\addplot[color=darkBlue!80, mark=star, mark size =3.2, line width = 1]
				coordinates
				{(1e0,0.204143)
					(1e-1,0.0664047)
					(1e-2,0.0213946)
					(1e-3,0.00775385)
					(1e-4,0.0045098)};
				\addplot[color=darkBlue, mark=star, mark size =3.2, line width = 1]
				coordinates
				{(1e0,0.204114)
					(1e-1,0.066302)
					(1e-2,0.021063)
					(1e-3,0.00677242)
					(1e-4,0.00245704)};
				\addplot[color=lightGreen, dotted, line width = 1]
				coordinates
				{(1e-0,1e-1)
					(1e-4,1e-3)};
			\end{loglogaxis}
		\end{tikzpicture}
		}
		\caption{\textbf{TC3 -} Convergence test for transport equation on a graph with one bifurcation node. Normalized $L^1$ error computed at time $t=1$ corresponding to space and time meshes with different dimensions.}
		\label{plot:branch:transport}
	\end{figure}

    \begin{figure}
        \centering
        \subfloat[\centering $t=0$]
        {\includegraphics[height=0.14\linewidth]{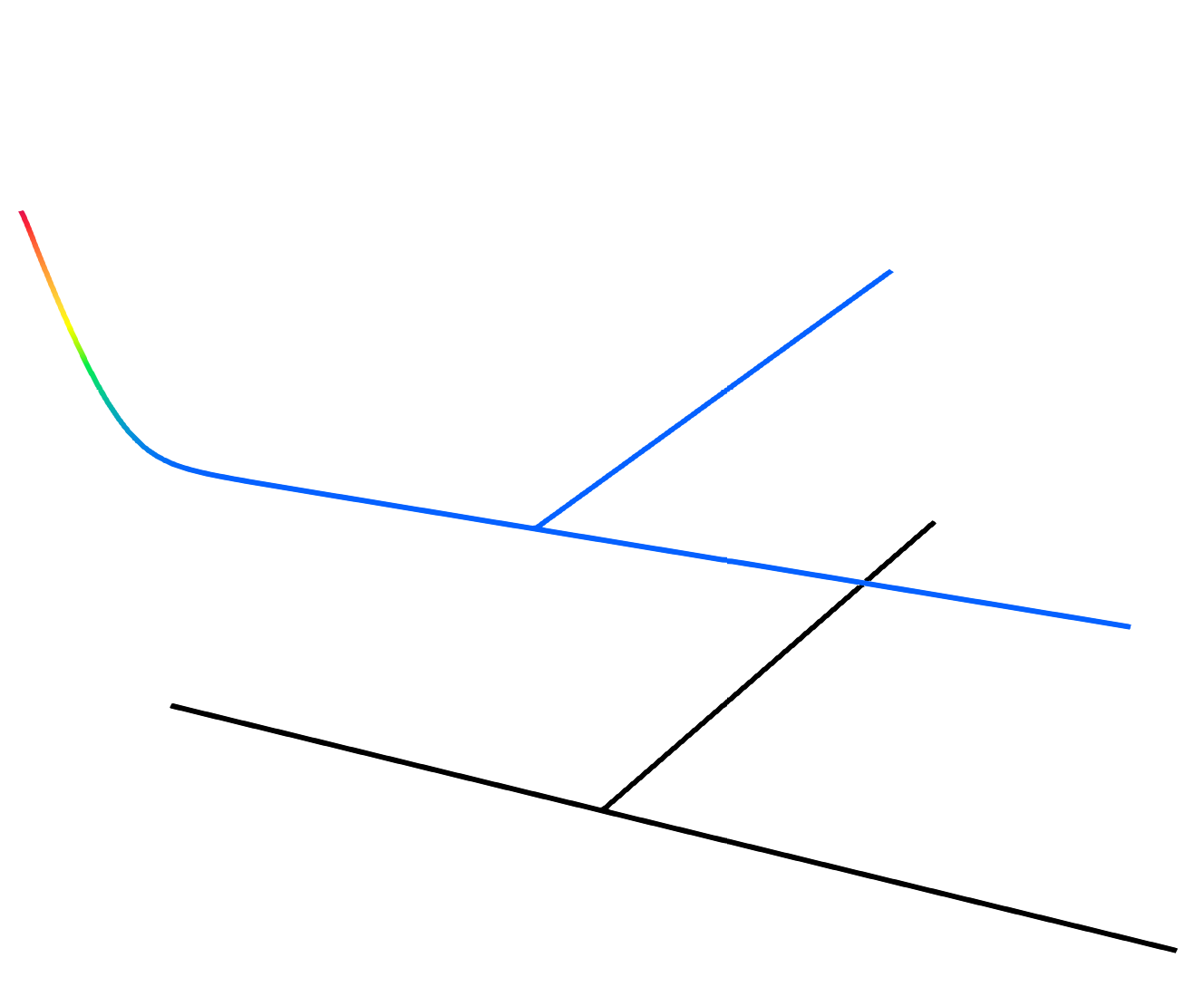}\hspace{2cm}}
        \subfloat[\centering $t=0.2$]
        {\includegraphics[height=0.14\linewidth]{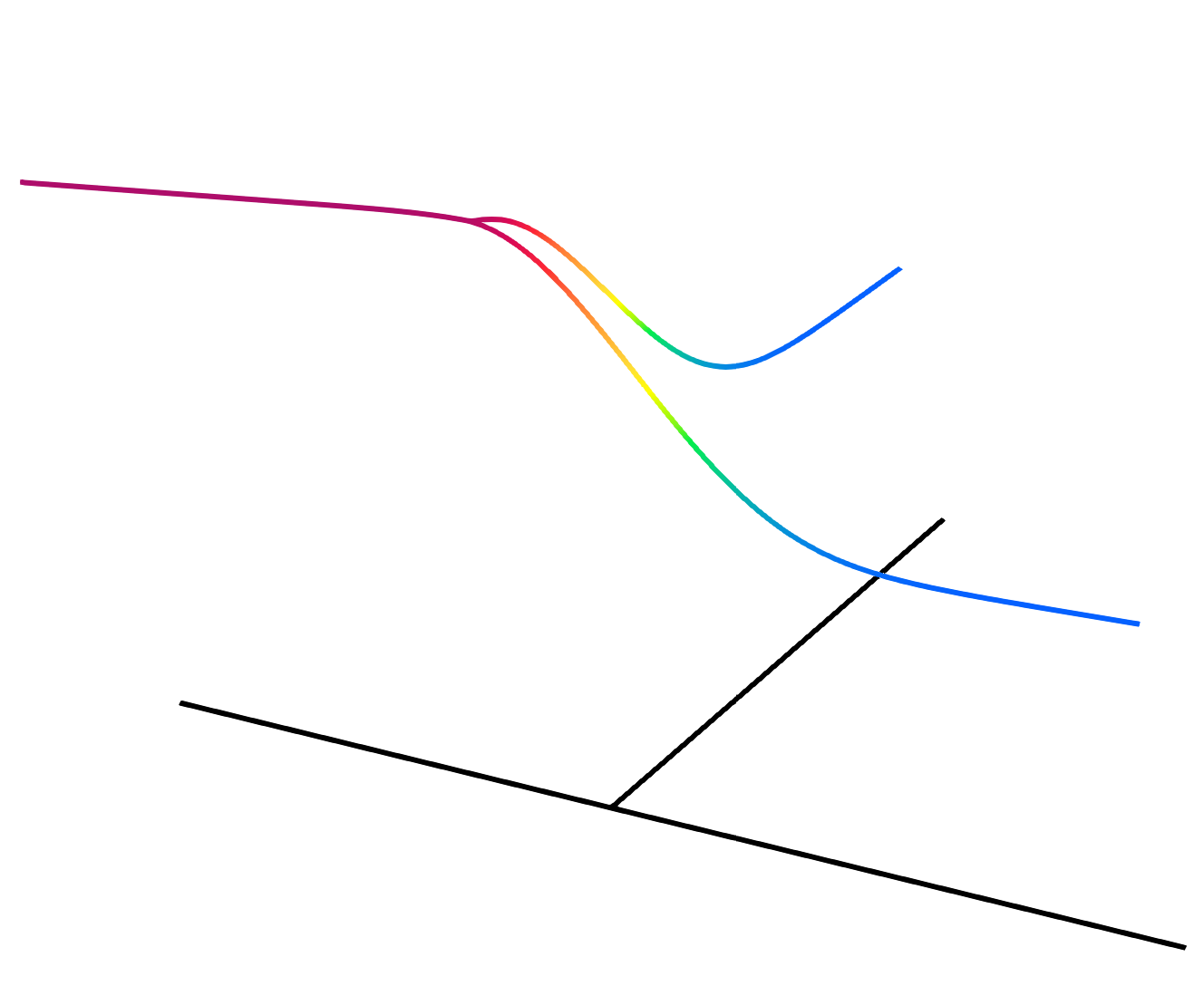}\hspace{2cm}}
        \subfloat[\centering $t=1$]
        {\includegraphics[height=0.14\linewidth]{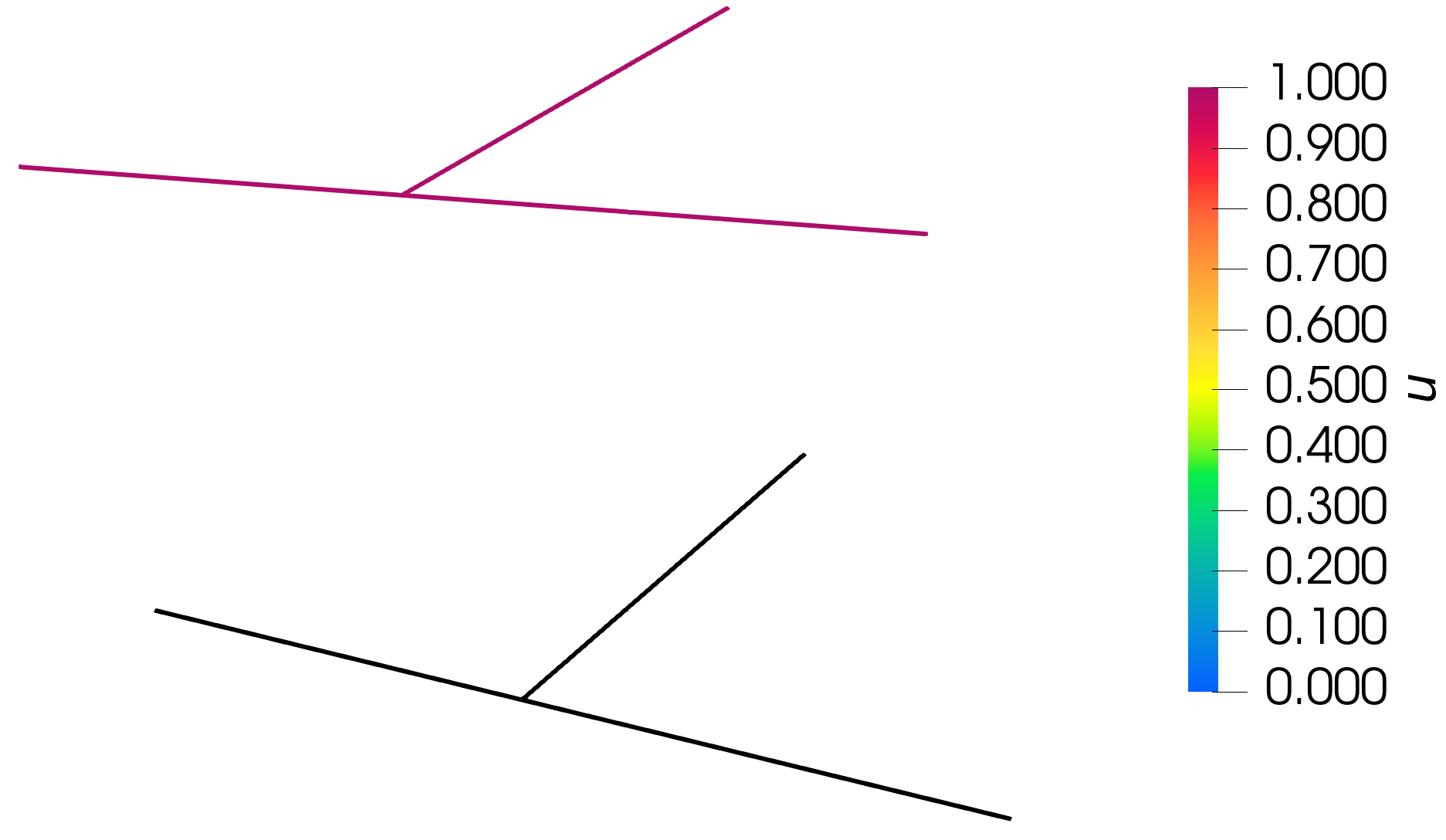}}
        \caption{\textbf{TC3 -} Numerical solution at initial time $t=0$, at $t=0.2$ when the edge $e_2$ is full and at the final time $t=1$. In black a representation of the domain $\Lambda$.}
        \label{figure:branch:transport}
    \end{figure}
	
	\subsection{TC4: Diffusion equation on a bifurcation}
	\label{section:R:Convergence:DiffusionBif}
	
	The second test case on a graph is the solution of diffusion equation~\eqref{eq:diffusion} on the domain represented in Figure~\ref{figure:domain:TC4} with four edges and one bifurcation node.\\
	We set diffusion coefficient $\nu=4$, impose homogeneous Dirichlet boundary conditions on the set of boundary nodes $\mathcal{N}_b = \{v_0,v_2,v_3,v_4\}$ and initial condition $u^0(s) = \cos\left(\frac{\pi}{2}s\right)$. This problem was constructed by starting from the following exact solution:
	
	\begin{equation*}
		u_\text{ex}(s,t) = \cos\left(\frac{\pi}{2}s\right)e^{-\pi^2 t}.
	\end{equation*}

	\noindent Due to the symmetry of the domain and of the solution, we expect the numerical solver to behave as on two separate straight lines and have similar performances as in the test case~\eqref{eq:tc2} presented in Section~\ref{section:R:ConvergenceDiffusion}. Indeed, in Figure~\ref{plot:branch:diffusion} we can see that the orders of convergence 1 and 2 for time and space, respectively, are shown also in presence of a bifurcation. Finally, the numerical solution at three different times is represented in Figure~\ref{figure:branch} and shows a good approximation of the initial datum and of the expected decrease in the peak of the solution.
	
	\begin{figure}[h]
		\centering
		\begin{tikzpicture}[scale=.7]
			\filldraw (0,0) circle (2pt) node [yshift=-.3cm,xshift=.8cm] {$v_1\equiv I$};
			\filldraw (0,2.5) circle (2pt) node [xshift = .8cm] {$v_0\equiv A$};
			\filldraw (-2.5,0) circle (2pt) node[xshift = -.8cm] {$v_2\equiv B$};
			\filldraw (2.5,0) circle (2pt) node [xshift = .8cm] {$v_3\equiv C$};
			\filldraw (0,-2.5) circle (2pt) node [xshift = .8cm] {$v_3\equiv D$};
			\draw [thick] (0,0) -- (-2.5,0) node [xshift = 1cm, yshift = .3cm] {$e_2$};
			\draw [thick] (0,0) -- (2.5,0) node [xshift = -1cm, yshift = .3cm] {$e_3$};
			\draw [thick] (0,2.5) -- (0,0) node [xshift = -.4cm, yshift = 1cm] {$e_1$};
			\draw [thick] (0,-2.5) -- (0,0) node [xshift = -.4cm, yshift = -1cm] {$e_4$};
		\end{tikzpicture}
		\caption{Simple one-dimensional domain with four edges and one bifurcation node $v_1$. The set of edges is $\mathcal{E}=\{e_1=IA,e_2=BI,e_3=IC,e_4=ID\}$. All the boundary nodes $A,\ B,\ C,\ D$ are source nodes and the only bifurcation node is $I$.}
		\label{figure:domain:TC4}
	\end{figure}
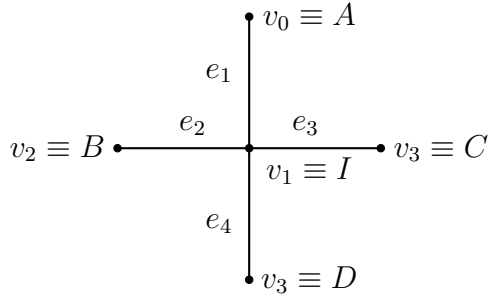
	
		\begin{figure}
		\centering
		\subfloat[\centering Error with respect to time discretization.]
		{\begin{tikzpicture}
			\begin{loglogaxis}
				[scale = .5,
				width = .45\textwidth,
				height = .3\textwidth,
				xlabel={$\Delta t$},
				ylabel={error},
				scale only axis,
				xtick={},
				ytick={},
				ymajorgrids=true,
				xmajorgrids=true,
				grid style=dashed,
				legend style = {draw=black, font=\scriptsize, at={(1.7,1)}},
				legend entries ={
					$h =10^{-1}$, $h = 10^{-2}$, $h =10^{-3}$, $\Delta t$},
				]
				\addplot[color=darkBlue!60, mark=star, mark size = 3.2,
				line width = 1]
				coordinates
				{(1e-1,0.00253062)
					(1e-2,7.73285e-5)
					(1e-3,7.22771e-6)
					(1e-4,1.31537e-6)
					(1e-5,7.47465e-7)
					(1e-6,6.76401e-7)
				};	
				\addplot[color=darkBlue!80, mark=star, mark size =3.2, line width = 1]
				coordinates
				{(1e-1,0.00252326)
					(1e-2,7.62556e-05)
					(1e-3,6.53671e-06)
					(1e-4,6.49348e-07)
					(1e-5,8.38528e-08)
					(1e-6,1.30978e-08)
				};	
				\addplot[color=darkBlue, mark=star, mark size =3.2, line width = 1]
				coordinates
				{(1e-1,0.00252319)
					(1e-2,7.63459e-05)
					(1e-3,6.52985e-06)
					(1e-4,6.42705e-07)
					(1e-5,7.72338e-08)
					(1e-6,6.48144e-09)
				};	
				\addplot[color=darkRed, dotted, line width = 1]
				coordinates	
				{(1e-1,1e-4)
					(1e-6,1e-9)};	
			\end{loglogaxis}
		\end{tikzpicture}\quad}
		\subfloat[\centering Error with respect to space discretization.]
		{\begin{tikzpicture}
			\begin{loglogaxis}
				[scale = .5,
				width = .45\textwidth,
				height = .3\textwidth,
				xlabel={$h$},
				ylabel={error},
				scale only axis,
				xtick={},
				ytick={},
				ymajorgrids=true,
				xmajorgrids=true,
				grid style=dashed,
				legend style = {draw=black, font=\scriptsize, at={(1.7,1)}},
				legend entries ={
                    $\Delta t = 10^{-4}$, $\Delta t = 10^{-5}$, $\Delta t = 10^{-6}$, 
					$h^2$},
				]
				\addplot[color=darkBlue!60, mark=star, mark size =3.2, line width = 1]
				coordinates
				{(0.25e0,8.71363e-05)
					(0.25e-1,7.47465e-7)
					(0.25e-2,8.38528e-08)
					(0.25e-3,7.72338e-08)
				};
				\addplot[color=darkBlue!80, mark=star, mark size =3.2, line width = 1]
				coordinates
				{(0.25e0,8.70276e-05)
					(0.25e-1,6.76401e-7)
					(0.25e-2,1.30978e-08)
					(0.25e-3,6.48144e-09)
				};
				\addplot[color=darkBlue, mark = star, mark size = 3.2, line width = 1]
				coordinates
				{(0.25e0,8.70189e-05)
					(0.25e-1,6.70603e-7)
					(0.25e-2,7.32389e-09)};
				\addplot[color=lightGreen,dotted, line width = 1]
				coordinates
				{(1e0,1e-4)
					(1e-4,1e-12)};
			\end{loglogaxis}
		\end{tikzpicture}}
		\caption{\textbf{TC4 -} Convergence test for diffusion equation on a graph with one bifurcation node. $L^1$ error computed at time $t=1$ corresponding to space and time meshes with different dimensions.}
		\label{plot:branch:diffusion}
	\end{figure}
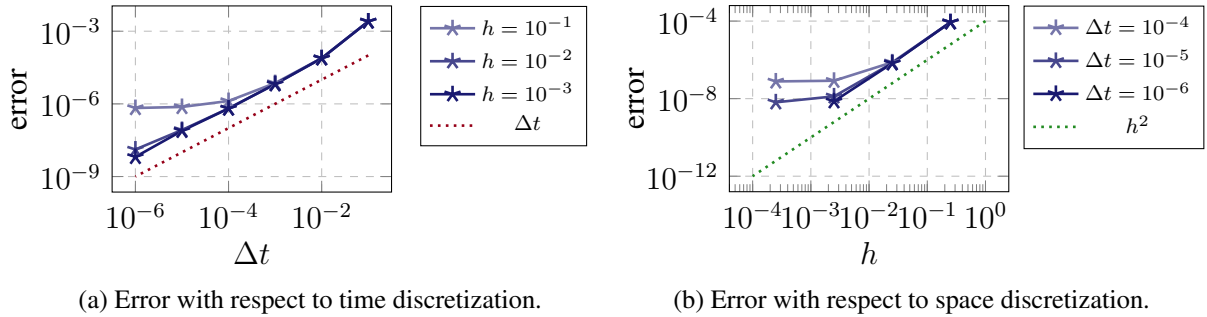

    \begin{figure}
        \centering
        \subfloat[\centering $t=0$]
        {\includegraphics[height=0.2\linewidth]{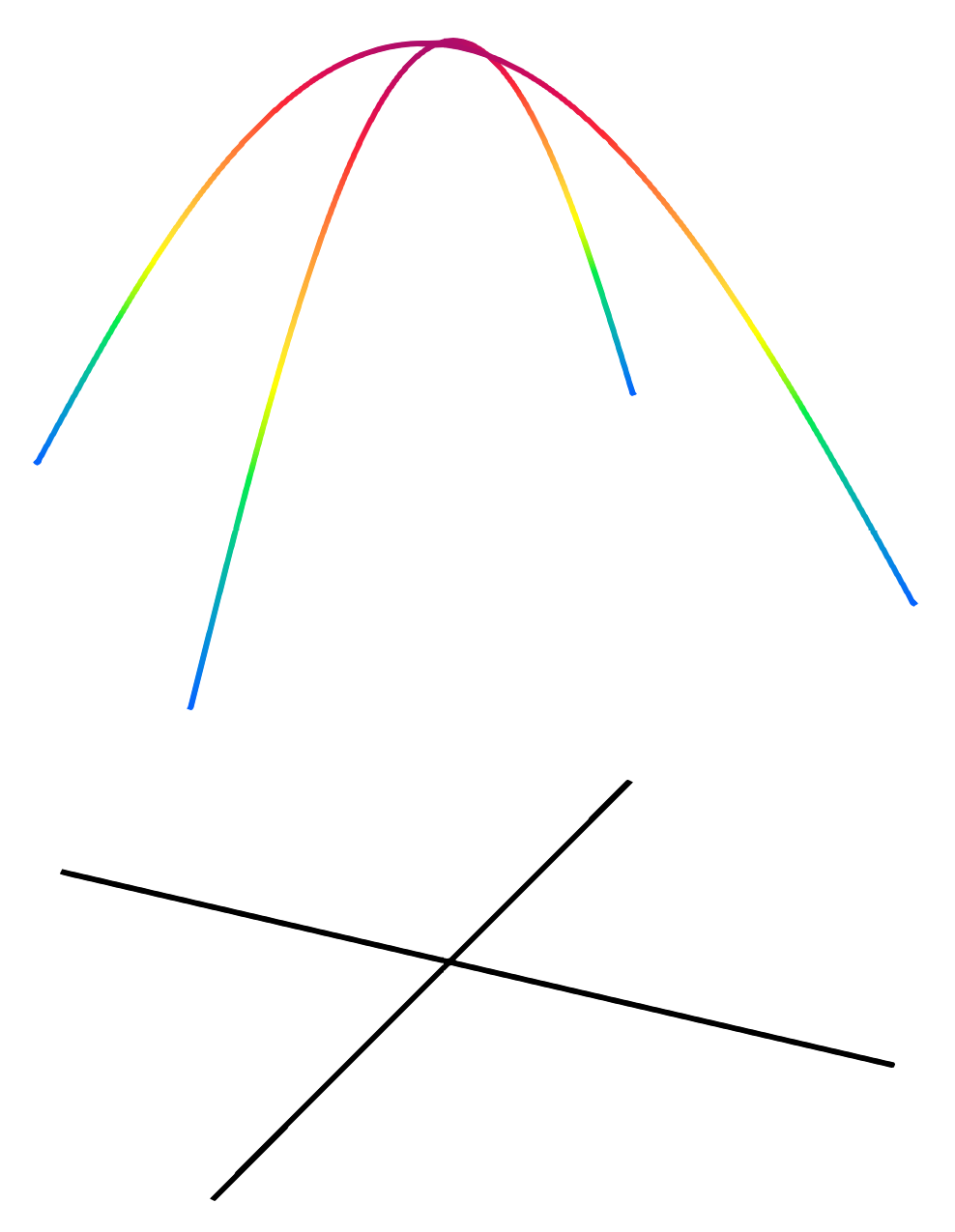}\hspace{2cm}}
        \subfloat[\centering $t=0.2$]
        {\includegraphics[height=0.2\linewidth]{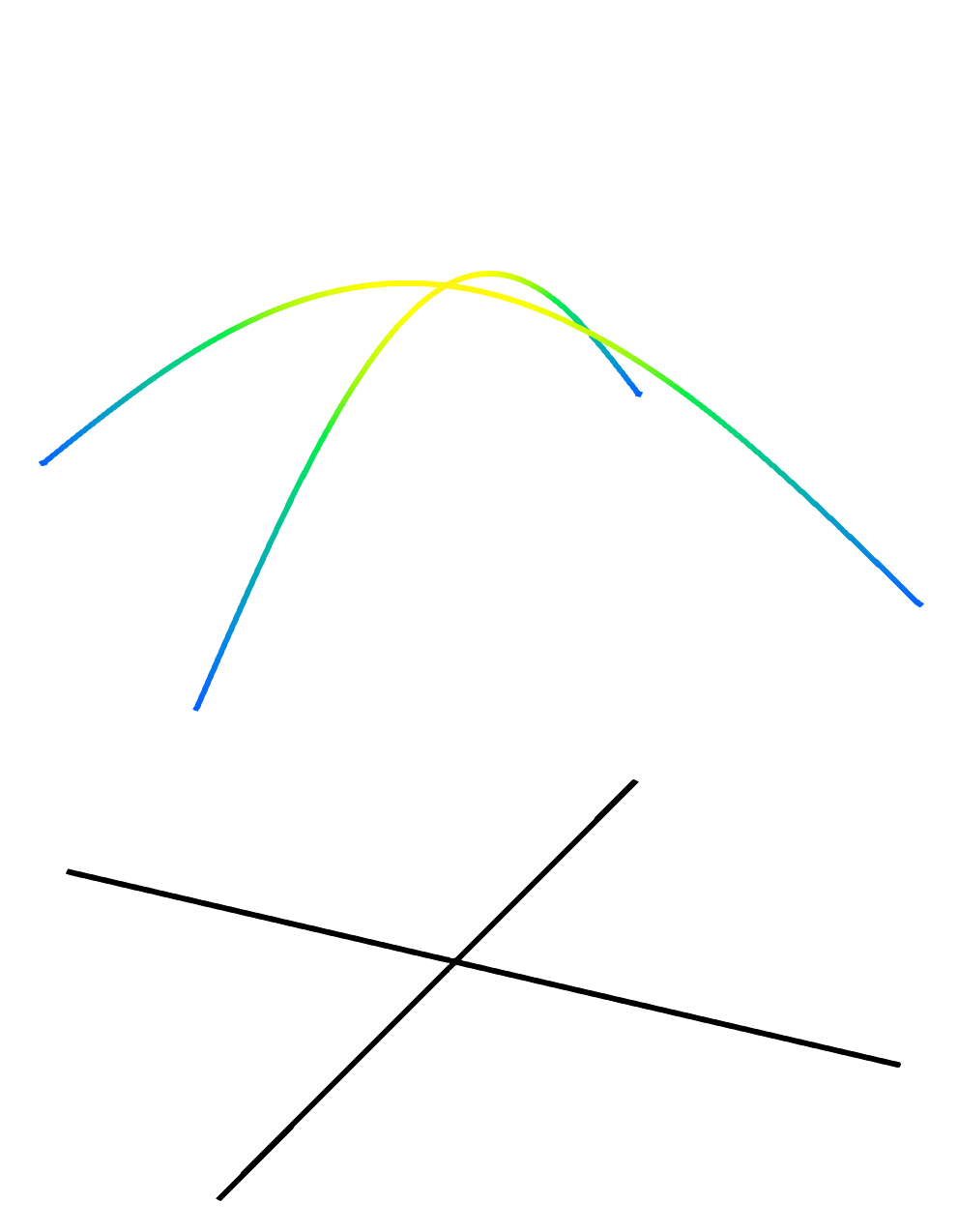}\hspace{2cm}}
        \subfloat[\centering $t=1$]
        {\includegraphics[height=0.2\linewidth]{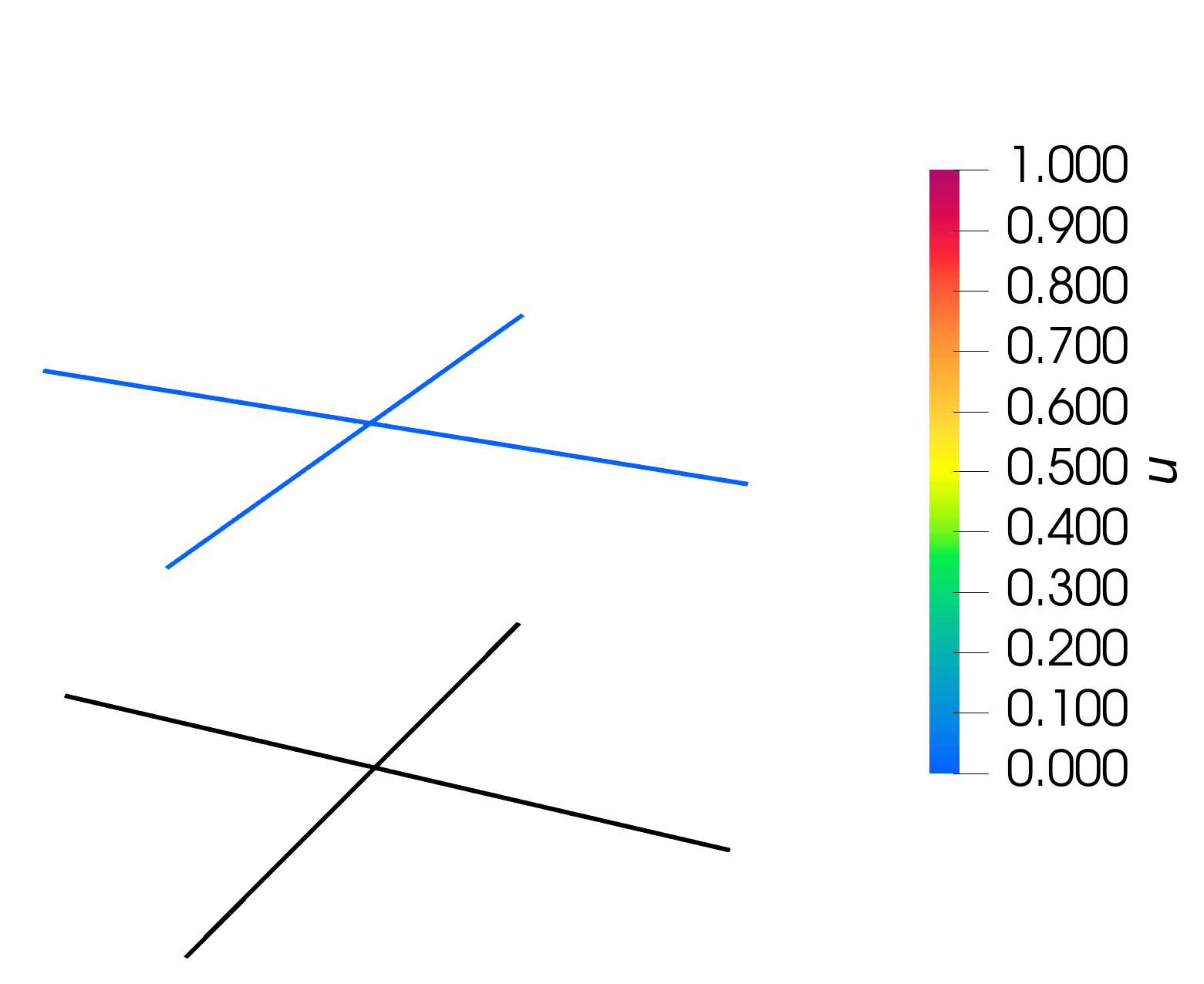}}
        \caption{\textbf{TC4 -} Numerical solution at initial time $t=0$, at $t=0.1$ and at the final time $t=1$. In black a representation of the domain $\Lambda$.}
        \label{figure:branch:diffusion}
    \end{figure}
 
	
	\subsection{TC5: Drift-diffusion equation on Electrical Treeing}
	\label{section:R:TreeingDiffusionTransport}
    \begin{figure}
        \centering
        \includegraphics[height=0.18\linewidth]{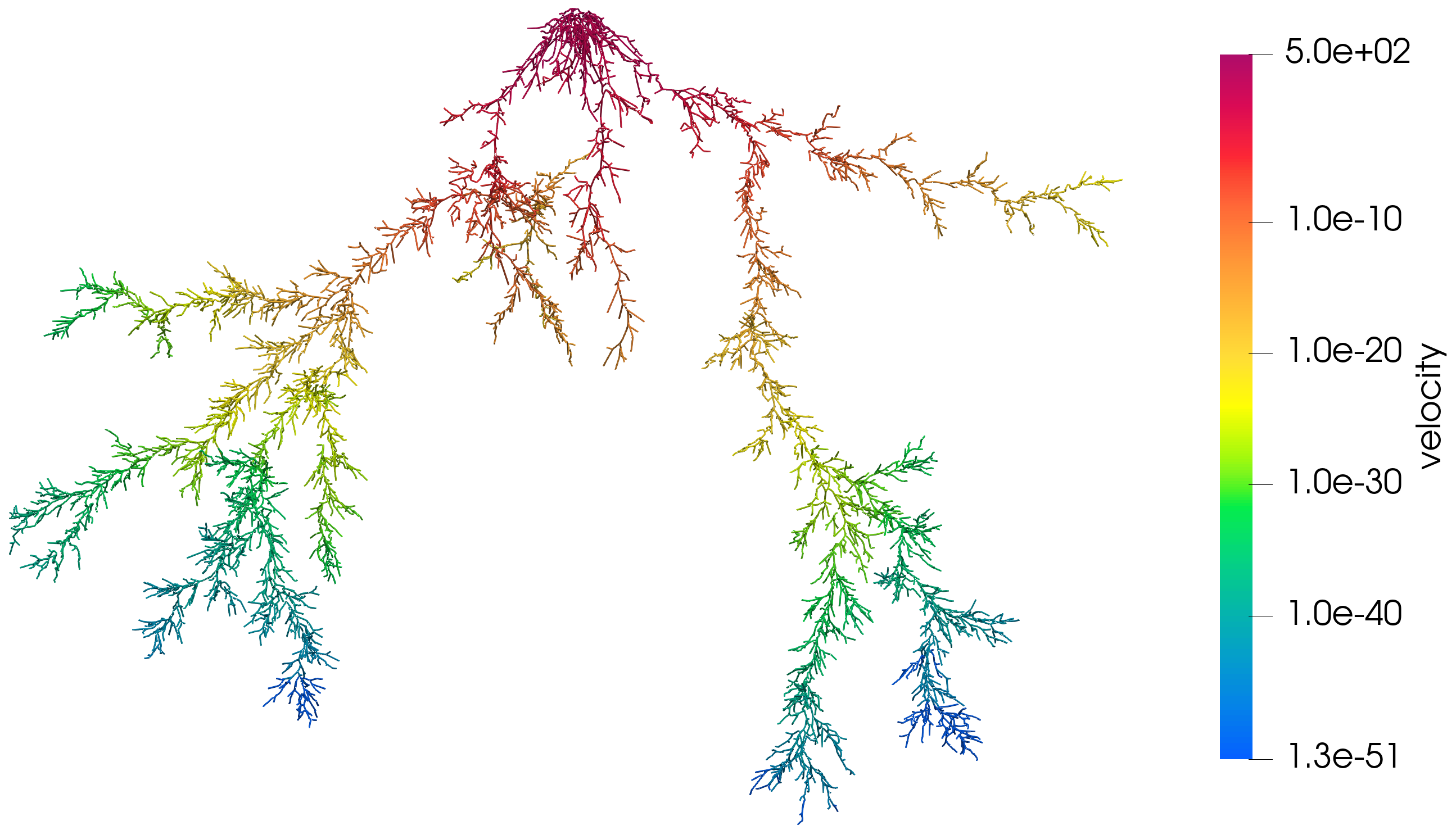}
        \caption{\textbf{TC5 -} Drift velocities on the graph edges. The maximum value is on the edges connected to the inflow node, and it decreases going towards the outflow note, until the minimum value of order $10^{-51}$.}
        \label{figure:TC5:velocity}
    \end{figure}
 
    \begin{figure}
        \centering
        \subfloat[\centering $t=0.1 s$]
        {\includegraphics[height=0.18\linewidth]{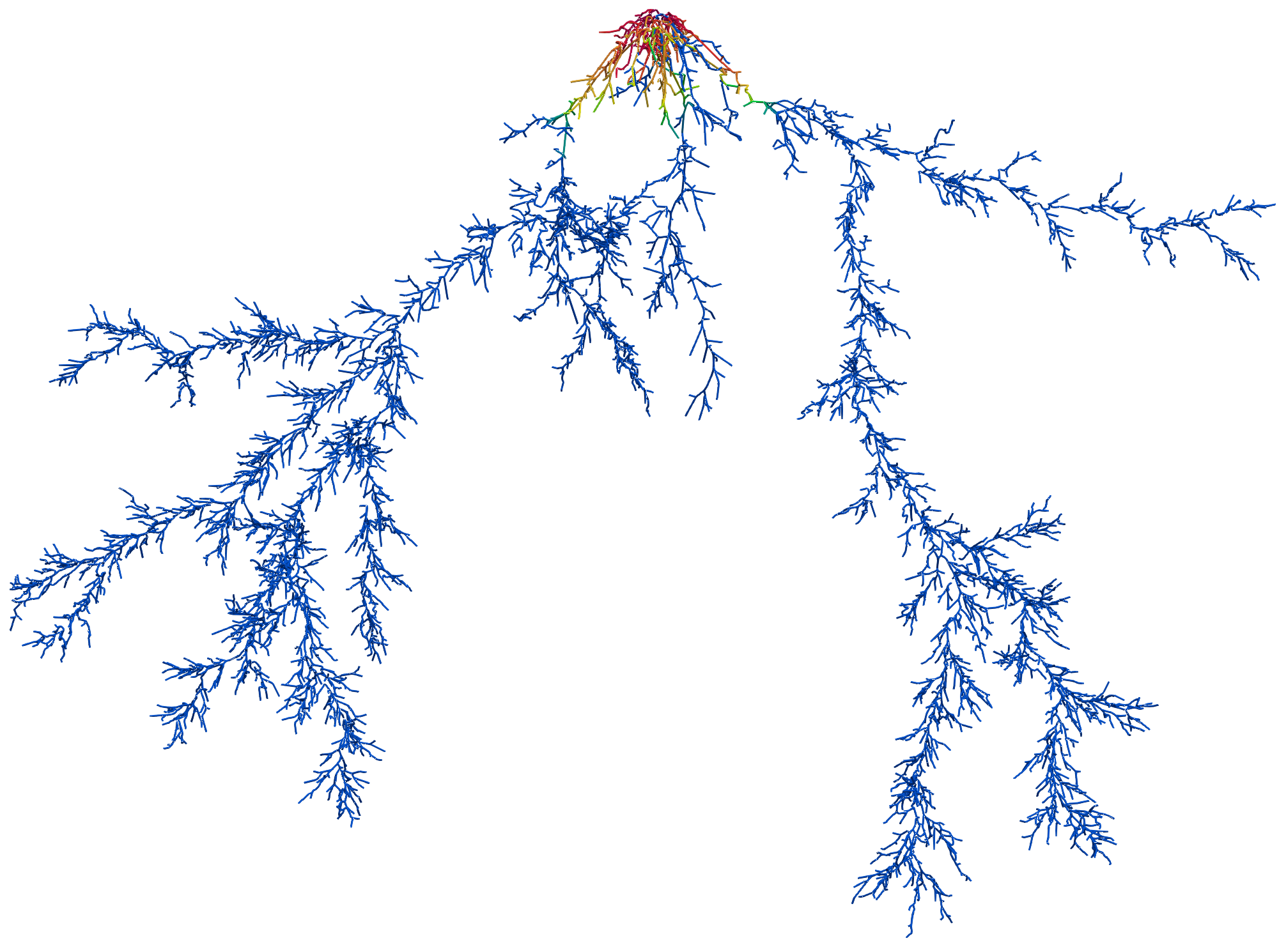}\hspace{2cm}}
        \subfloat[\centering $t=500 s$]
        {\includegraphics[height=0.18\linewidth]{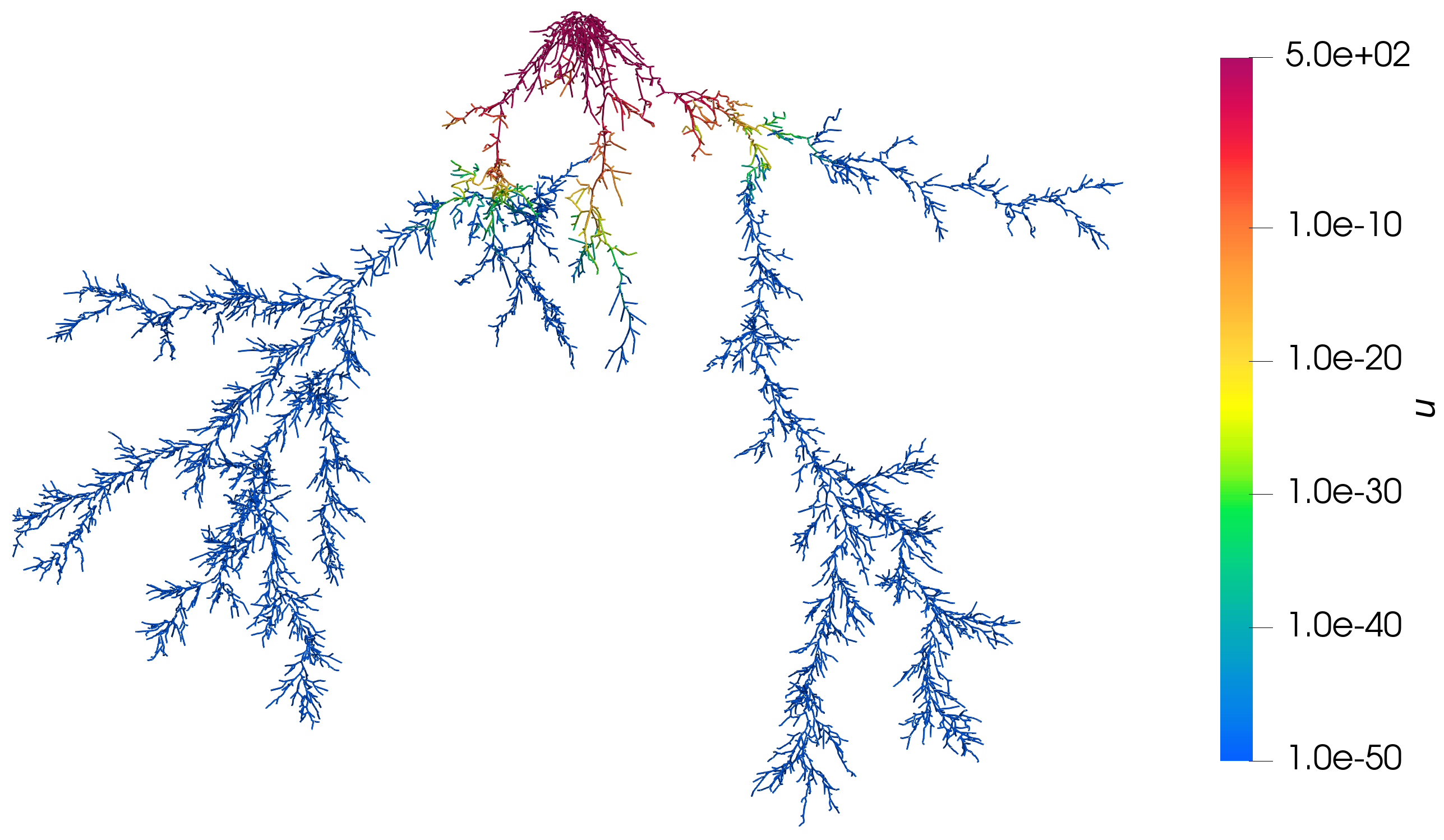}}
        \caption{\textbf{TC5 -} Numerical solution at first time step $t=0.1 s$ and at the final time $t=500 s$ on the domain of an an electrical treeing.}
        \label{figure:TC5}
    \end{figure}
    Finally, we solve the drift-diffusion problem~\eqref{eq:diffusion_transport} on a ramified domain representing the skeleton of a typical electrical treeing, experimentally obtained by X-ray computed tomography~\cite{schurch2014imaging} on an existing defect in an electrical cable, resulting in a graph composed of 12544 edges. Such an extended defect can hardly be discretized in 3D, due to it geometrical complexity. Thus, we need an ad hoc numerical solver for the reduction to PDEs on a 1D graph of problems defined on a 3D electrical treeing.\\
    This applied test case is derived by a geometrical reduction to one dimension of the model describing the movement of charge densities across the 3D electrical treeing, where the unknown $u$ represents the integral mean of the charge concentration on transversal sections of the 3D domain. We consider a constant electron diffusion coefficient $\nu = 0.5 \times 10^{-6} \frac{m^2}{\mu s}$, while the value of the transport speed on each edge $e_k$ of the graph is given by $c_k = \mu |\mathbf{E}_k|$, $k=1,\dots,\mathcal{N}_e$, where $\mu = 10^{-5} \frac{m^2}{kV\ \mu s}$ denotes the electronic mobility of the material composing the domain and $|\mathbf{E}_k|$ the magnitude of the electric field along $e_k$, $k=1,\dots,\mathcal{N}_e$. We assume that the electric field is different on each edge of the graph, respecting the compatibility condition~\eqref{eq:th:tr:c}. The electric field on the edges connected to the inflow node is equal to $50\ kV$, and it is split equally among its outgoing neighbors, at each bifurcation. Figure~\ref{figure:TC5:velocity} shows the value of the velocity corresponding to each edge of the graph.\\
    We assume that the set of source nodes consists of a single node, given by the root of the tree, where we impose the inflow electron concentration through the Dirichlet condition $u = 100\frac{1}{m^2}$, and simulate over the time interval $(0, 500 s]$, starting from the initial condition $u_0 = 0\frac{1}{m^2}$. The choice of such a long time interval for the simulation is for illustrative purposes only, due to the low transport velocity as we approach the periphery of the tree.\\
    For the time discretization we consider a time step $dt = 0.1 s$, while the edges of the space mesh coincide with the edges of the graph, not further partitioned. The final result is shown in Figure~\ref{figure:TC5}, where the concentration imposed as inflow condition on top of the domain is spread across the whole graph. Not all the edges are filled because, as displayed in Figure~\ref{figure:TC5:velocity}, the transport velocity becomes close to zero at the middle of the graph. However, we expect that, in a long time, the graph will all be filled, i.e. the concentration on all the edges will eventually be the same as prescribed by the inflow condition.\\
    We were able to simulate this problem in approximately 2 hours in parallel on 6 cores on a laptop with 16 GiB RAM, 11th Gen Intel(R) Core(TM) i7-11. In this test case we were able to deal with a very complex and extended graph, solving the problem on a relatively coarse time grid and without further partitioning the edges of the graph, obtaining a positive solution at each time step, in reasonable time. 
	
	\section{Conclusions and Discussion}
	\label{section:Conclusions}

    We have defined two numerical schemes based on Finite Volumes for the approximate solution of a linear transport equation and a drift-diffusion problem on one-dimensional graph domains, with implicit time discretization. We have proven the existence and uniqueness of the solution to both discrete problems and that, starting from a non-negative initial condition and source term, the solutions at each time step remain non-negative. Moreover, we have shown that the numerical fluxes are consistent at the graph nodes. Positivity of the approximate solution at each time step is necessary for the problems of our interest, concerning plasma, where the presented methods are coupled with chemical solvers, that cannot deal with negative concentrations.\\
    The two methods are extensions of a Finite Volume upwind scheme for the transport equation and finite differences for the diffusion part. We have observed that the convergence results of the proposed methods on a one-dimensional domain consisting in a straight line coincide with the theoretical results for these two schemes and that this property is also extended to graph domains with bifurcation nodes.\\
    Finally, we have applied the two numerical methods to the solution of a transport equation and a drift-diffusion problem on the geometry of an electrical tree, and obtained a solution in a very short computational time, which was our starting motivation. The methods present good performance on real complex geometries and moderate computational cost, despite the high number of branches. Moreover, we have made a parallel implementation of each solver, allowing to exploit resources of high performance computers. Unfortunately, both the upwind and implicit Euler methods are diffusive, but our choice fell on these schemes because of the necessity of a moderate computational cost. In fact, the possibility of adopting large time spacings $\Delta t$, due to the stability of the time numerical scheme, helps in the reduction of the computational time.\\
    We aim at incorporating the presented problems, describing the evolution of charge densities in non-thermal plasma, into a more comprehensive framework modelling the whole phenomenon of Partial Discharges and the evolution of the electrical treeing. The goal is to overcome the employment of semi-empirical schemes, used so far in th literature on the topic, keeping a controlled computational cost~\cite{villa2021uncoupled, villa2022simulation}. However, a possible extension can be the application of higher order methods, such as Discontinuous Galerkin in space and Crank-Nicholson, which is still unconditionally stable, in time.

    \section{Acknowledgments}
    \label{section:acknowledgments}
    
    This work has been financed by the Research Found for the Italian Electrical System under the Contract Agreement between RSE and the Ministry of Economic Development. The mesh of the treeing structure is courtesy of prof. R. Schurch, Universidad T\'ecnica Federico Santa Mar\'ia Valpara\'iso, Chile.
	
	\printbibliography[
	heading=bibintoc,
	title={References}
	]

\end{document}